\documentclass[10pt]{amsart}
\usepackage{amsmath,amsthm,amssymb}
\usepackage{graphicx}

\textwidth 145mm
\oddsidemargin 8mm
\evensidemargin 8mm
\textheight 215mm
\topmargin 5mm

\theoremstyle{definition}

\newtheorem{theorem}{Theorem}

\newtheorem{criteria}{Criterion}

\newtheorem{definition}{Definition}
\newtheorem{corollary}{Corollary}

\newtheorem{remark}{Remark}

\newcommand{\R}{{\mathbb R}}
\newcommand{\gT}{{\mathfrak T}}
\newcommand{\const}{{\rm const}}
\newcommand{\essup}{{\rm essup}}

\begin{document}

\title[Disconjugacy for a second order linear differential equation]{The theory of disconjugacy for a second order linear differential equation}

\author{V.~Y{a}.~Derr}

\address{Faculty of Mathematics, Udmurtia State University \\ Universitetskya St., 1 (building 4), Izhevsk, 426034, Russia}

\email{vandv@udm.net}

\subjclass[2000]{34B05, 35B40}

\begin{abstract}
This is an introduction to the theory of disconjugacy for a second order linear differential equation.
We give new proofs of some of basic results and obtain new sufficient conditions for disconjugacy (in particular, on the whole real axis).
\end{abstract}

\maketitle

The differential equation 
\begin{equation}
\label{eq1}
(Lx)(t):= x''+p(t)x'+q(t)x=0,
\end{equation}
is called {\it disconjugate} on an open interval $J \subset \mathbb R$ if any of its non-trivial solutions have at most one zero in $J$. 
The property of disconjugacy, which guarantees the existence of the unique 
solution to the boundary value problem
\begin{equation*}
x''+p(t)x'+q(t)x=f(t)\quad ,\quad x(a)=0,\;x(b)=0,
\end{equation*}
was discovered in 1951 by A.~Wintner \cite{wint51}, and since 
then attracted the interest of many mathematicians \cite{AzTz}--\cite{muld78}, 
in particular due to its great importance to qualitative theory of differential equation (\ref{eq1}).

In this article we describe the present state of the theory of disconjugacy for differential equation (\ref{eq1}) (Sections 1--8).

We also obtain a new sufficient condition for disconjugacy of (\ref{eq1}) (Sections 9 and 10).
Traditionally (see, e.g., \cite{pol24} --- \cite{muld78}), the conditions for disconjugacy are obtained for differential equations of the form
\begin{equation}
\label{eqQ}
x''+Q(t)x=0
\end{equation}
and include the assumptions of smallness of coefficient $Q$.
Our condition does not necessarily require the smallness of $q$.

\vspace*{3mm}

\textbf{1. }We start with recalling some definitions. Let us consider differential equation 
\begin{equation}
\label{eq2}
(Lx)(t)=f(t)
\end{equation}
where
\begin{equation*}
t\in I:=(a,b),\quad -\infty\leqslant a<b\leqslant +\infty, \quad p,~q,~f:I \mapsto \mathbb R \text{ are locally summable}.
\end{equation*}
A function $x:I\to \R$ is called \textit{solution} of equation (\ref{eq2}) if it has locally absolutely continuous first derivative $x'$ and satisfies equation (\ref{eq2}) almost everywhere 
(with respect to Lebesgue measure). Under our assumptions on $p,q$ there exists the unique solution of equation (\ref{eq2}) satisfying $x(a)=\xi_0,\;x'(a)=\xi_1,\;a\in I.$ 
Recall that the general solution of (\ref{eq1}) has form $x(t)=c_1u_1(t)+c_2u_2(t),$ where $u_1,u_2$ are linearly independent solutions of (\ref{eq1}), 
$c_1,c_2$ are arbitrary constants; the pair $\{u_1,u_2\}$ is called \textit{fundamental system} of (\ref{eq1}); its
Wronskian $W(t):=[u_1,u_2](t)$ is nowhere zero on $I.$ 

Function $C:I\times [\alpha,t]\to \R$ is called \textit{Cauchy's function} of equation (\ref{eq1}) if
$$
(LC)(\cdot ,s)=0\quad \text{for almost all}\;t\geqslant s, \quad C(s,s)=0,\quad \frac{\partial C(s,s)}{\partial t}=1\quad (s\in I).
$$
We note that Cauchy's function always exists and is unique. One can represent the general solution of equation (\ref{eq2}) in the form
\begin{equation}\label{eq3}
x(t)=c_1u_1(t)+c_2u_2(t)+\int_a^t C(t,s)f(s)\,ds
\end{equation}
where $(u_1,u_2)$\, is the fundamental system of solutions of (\ref{eq1}) and $c_1,c_2$ are arbitrary constants.

Function $G:[a,b]^2\to \R$ is called \textit{Green's function} of boundary value problem
\begin{equation}\label{eq4}
(Lx)(t)=f(t)\quad (t\in I),\quad x(a)=0,\;x(b)=0\quad (a,b\in I),
\end{equation}
provided that it satisfies the following conditions:

1) $G$ is continuous on $[a,b]^2;$

2) $\frac{\partial G(\cdot,s)}{\partial t}$ is absolutely continuous in the triangles $a\leqslant s<t\leqslant b$ and $a\leqslant t<s\leqslant b,$ and
$$\frac{\partial G(s+,s)}{\partial t}-\frac{\partial G(s-,s)}{\partial t}=1;$$

3)$(LG)(\cdot ,s)=0\;\text{при}\;t\ne s;$

4) $G(a,s)=0,\;G(b,s)=0.$

If boundary value problem (\ref{eq4}) has the unique solution, then it has the unique Green's function, and its solution $x$ admits presentation
$$x(t)=\displaystyle{\int_a^b} G(t,s)f(s)\,ds.$$ Also, one has the following identity
$$
G(t,s)= 
\begin {cases} 
-\frac {C(b,t)C(s,a)}{C(b,a)},&\text{ if }\; a \leqslant s < t ,\\ 
-\frac {C(t,a)C(b,s)}{C(b,a)},&\text{ if }\; t\leqslant s \leqslant b,
\end {cases}
$$
which implies that if $C(t,s)>0,$ for $a\leqslant s<t\leqslant b,$ then $G(t,s)<0$ for $(t,s)\in (a,b)^2.$

\vspace*{2mm}

\textbf{2. }We will also need the following two results due to Sturm: Separation of Zeros Theorem and Comparison Theorem \cite[p. 252]{stepvv},\,\cite[p. 81]{hart70}.

\begin{theorem}[Separation of Zeros]
\label{thsep} 
Let $a$, $b\in I$, suppose that $x$ is a solution of equation {\rm (\ref{eq1})} such that $x(a)=x(b)=0,$   $x(t)\ne 0$ for any $t\in (a,b).$ Then any other solution of (\ref{eq1}), linearly independent with $x$, has the only zero in  $(a,b).$
\end{theorem}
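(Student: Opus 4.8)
The plan is to exploit the Wronskian $W(t)=[x,y](t)=x(t)y'(t)-x'(t)y(t)$ of the given solution $x$ together with an arbitrary solution $y$ linearly independent of $x$. Since $\{x,y\}$ is then a fundamental system, $W$ is continuous and nowhere zero on $I$, hence of constant sign. I will combine this sign-constancy with the behavior of $W$ at the consecutive zeros $a,b$ of $x$ to locate the zeros of $y$.

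First I would record two preliminary facts. Because $x(a)=0$ and $x$ is nontrivial, uniqueness of the initial value problem forces $x'(a)\ne 0$ (otherwise $x\equiv 0$), and likewise $x'(b)\ne 0$. After replacing $x$ by $-x$ if necessary, I may assume $x>0$ on $(a,b)$; then $x'(a)>0$ and $x'(b)<0$. Next, $y(a)\ne 0$ and $y(b)\ne 0$: since $x(a)=0$ we have $W(a)=-x'(a)y(a)$, so $y(a)=0$ would give $W(a)=0$, contradicting the non-vanishing of $W$; the same reasoning applies at $b$.

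To show $y$ has at least one zero in $(a,b)$, I argue by contradiction. If $y\ne 0$ throughout $(a,b)$, then, $y$ being continuous and nonzero at the endpoints as well, $y$ keeps a constant sign on $[a,b]$, say $y>0$. Evaluating the Wronskian at the endpoints gives $W(a)=-x'(a)y(a)<0$ and $W(b)=-x'(b)y(b)>0$. Since $W$ is continuous, the intermediate value theorem produces $\tau\in(a,b)$ with $W(\tau)=0$, contradicting that $W$ never vanishes. Hence $y$ vanishes somewhere in $(a,b)$.

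It remains to show $y$ has at most one zero there. Suppose instead $y$ had two zeros in $(a,b)$; choosing two consecutive ones $t_1<t_2$ (so $y\ne 0$ on $(t_1,t_2)$) and interchanging the roles of $x$ and $y$ in the previous paragraph, I would conclude that $x$ must vanish at some point of $(t_1,t_2)\subset(a,b)$, contradicting $x\ne 0$ on $(a,b)$. Therefore $y$ has exactly one zero in $(a,b)$. The only delicate points are the endpoint sign analysis---guaranteeing $x'(a)>0>x'(b)$ through uniqueness---and checking that $W$ is genuinely continuous under the weak regularity hypotheses on $p,q$; the latter holds because each solution has a locally absolutely continuous, hence continuous, derivative, so $W=xy'-x'y$ is continuous.
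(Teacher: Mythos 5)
Your proof is correct, but it reaches the contradiction by a different mechanism than the paper. The paper also works with the Wronskian $W=[x,y]$, but instead of evaluating it at the endpoints it forms the quotient derivative $h=\bigl(\frac{x}{y}\bigr)'=-\frac{W}{y^2}$ (the paper writes $h=-\frac{W}{y}$, evidently a slip, since the claimed identity $h=(x/y)'$ requires $y^2$), observes that $h$ has constant sign on $[a,b]$, and then integrates: $\int_a^b h\,dt=\frac{x(b)}{y(b)}-\frac{x(a)}{y(a)}=0$, contradicting $\int_a^b h\,dt\neq 0$. That route never needs any information about $x'(a)$ or $x'(b)$; it only needs $y\neq 0$ on $[a,b]$, which it gets from linear independence at the endpoints exactly as you do. Your route is the classical one: you pay for it with the preliminary endpoint analysis (IVP uniqueness gives $x'(a)\neq 0\neq x'(b)$, then the normalization $x>0$ pins down $x'(a)>0>x'(b)$), and in exchange the contradiction is purely pointwise --- opposite signs of $W(a)$ and $W(b)$ plus the intermediate value theorem, with no integration and no division by $y$. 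Both proofs handle the at-most-one-zero part identically, by swapping the roles of $x$ and $y$ on an interval between consecutive zeros (and both rely tacitly on the standard fact that zeros of a nontrivial solution are isolated, so consecutive zeros exist). Your closing remarks on regularity are apt: under locally summable $p,q$ the solutions have locally absolutely continuous first derivatives, so $W$ is continuous and, being the Wronskian of a fundamental system, nowhere zero, which is all either argument requires.
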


\begin{proof}
Suppose that $y$ is a solution of equation (\ref{eq1}) linearly independent with $x$ and such that $y(t)\ne 0$ on $(a,b).$ Since $y(a)\ne 0,\;y(b)\ne 0$ (due to linear independence of $x$ and $y$),
$y(t)\ne 0$ on $[a,b].$ Therefore, function $$h(t):= -\dfrac{W(t)}{y(t)},$$ where Wronskian $W$ of $\{x,y\}$ is continuous and nowhere zero on $[a,b]$, hence $h(t)\ne 0$ on $[a,b].$  
Without loss of generality $h(t)>0$ on $[a,b].$ Since $$h=\left(\dfrac{x}{y}\right)',$$ we have
$\int_a^bh(t)\,dt>0$ and, at the same time, $$\int_a^bh(t)\,dt=\dfrac{x(b)}{y(b)}-\dfrac{x(a)}{y(a)}=0.$$ The latter implies that $y(t^*)=0$ at some $t^*\in (a,b).$

If $y(t_*)=0$ at some $t_*\ne t^*,$ then, as we already proved, $x$ would have a zero in $(a,b),$ which contradicts to our assumptions.
\end{proof}

Let $a\in I,$ $x$ be a solution of equation (\ref{eq1}) such that $x(a)=0$. Point $\rho_+(a)>a$ $\bigl(\rho_-(a)<a\bigr)$ is called {\it right} ({\it left}) 
{\it conjugate point of $a$}  if
$$
x(\rho_{\pm}(a))=0,\; x(t)\ne 0\quad \text {in}\quad (a,\rho_+(a))\quad  \bigl(\text{in}\;(\rho_-(a),a)\bigr).
$$
If $x(t)\ne 0$ on $(a,\beta)$ $\bigl(\text{respectively}, (\alpha , a)\bigr),$ we define $\rho_+(a)=\beta\quad \bigl(\rho_-(a)=\alpha\bigr)$

\begin{corollary}\label{incrrho}
{\it Functions  $\rho_{\pm}$\, are strictly increasing. 
Furthermore, $\rho_+\bigl(\rho_-(t)\bigl)=\rho_-\bigl(\rho_+(t)\bigl)=t\quad (t\in I),$ i.e., functions $\rho_{\pm}$ are the inverses of each other and map continuously any interval in $I$ to an interval in $I.$}
\end{corollary}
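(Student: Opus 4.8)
The plan is to reduce everything to two facts about the solution $x_t$ (unique up to a scalar) that vanishes at a given $t\in I$: its zeros are simple and isolated, since $x_t(t)=0$ forces $x_t'(t)\ne0$ by uniqueness of the Cauchy problem, and two solutions sharing a zero are linearly dependent. With this, $\rho_{+}(t)$ is the first zero of $x_t$ to the right of $t$ (or the right endpoint $\beta$ of $I$ if none), and $\rho_{-}(t)$ the first to the left (or $\alpha$). I would first dispose of the inverse relations, which are almost immediate: if $s:=\rho_{+}(t)\in I$, then $x_t(s)=0$, so the solution vanishing at $s$ is a scalar multiple of $x_t$; since $x_t(t)=0$ and $x_t\ne0$ on $(t,s)$, the largest zero of that solution below $s$ is exactly $t$, i.e. $\rho_{-}(\rho_{+}(t))=t$. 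The identity $\rho_{+}(\rho_{-}(t))=t$ is symmetric.

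For strict monotonicity of $\rho_{+}$ I would argue in two steps. Injectivity: if $\rho_{+}(t_1)=\rho_{+}(t_2)=s\in I$ with $t_1<t_2$, then $x_{t_1}$ and $x_{t_2}$ both vanish at $s$, hence are dependent and have the same zeros; yet $x_{t_2}(t_2)=0$ while $t_2\in(t_1,s)$, where $x_{t_1}$ has no zero — a contradiction. Monotonicity: suppose $t_1<t_2$ but $\rho_{+}(t_2)=:s_2<s_1:=\rho_{+}(t_1)$; then $s_2<\beta$, so $x_{t_2}(s_2)=0$, and $(t_2,s_2)\subset(t_1,s_1)$, on which $x_{t_1}$ has no zero. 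But $t_2,s_2$ are consecutive zeros of $x_{t_2}$, and $x_{t_1}$ is linearly independent of $x_{t_2}$ (were they dependent, $x_{t_2}$ would be a multiple of $x_{t_1}$, nonzero at $t_2\in(t_1,s_1)$), so by the Separation of Zeros Theorem (Theorem~\ref{thsep}) $x_{t_1}$ must have a zero in $(t_2,s_2)$ — a contradiction. Hence $\rho_{+}$ is nondecreasing, and with injectivity it is strictly increasing; since $\rho_{-}$ is its two-sided inverse, $\rho_{-}$ is strictly increasing as well.

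Finally, for continuity and the interval-to-interval property, I would use the elementary fact that a strictly increasing function on an interval is continuous precisely when its image is an interval. The domain $D_{+}=\{t\in I:\rho_{+}(t)\in I\}$ is itself an interval by monotonicity. To see that the image is an interval, take $s_1<s<s_2$ with $s_1,s_2$ right conjugate points; writing $s_1=\rho_{+}(t_1)$, the solution $x_s$ is either independent of $x_{t_1}$, in which case the Separation Theorem forces a zero of $x_s$ in $(t_1,s_1)\subset(\alpha,s)$, or a multiple of $x_{t_1}$, which again vanishes below $s$. Either way $\rho_{-}(s)\in I$, so $\rho_{+}(\rho_{-}(s))=s$ exhibits $s$ in the image; thus the image is an interval and $\rho_{+}$ (hence $\rho_{-}$) is continuous and maps intervals to intervals. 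I expect the main obstacle to be precisely this last part: ruling out jumps of a monotone function and handling the boundary cases in which $\rho_{\pm}$ attains an endpoint value $\alpha$ or $\beta\notin I$, where the Separation Theorem does not apply directly and the composition identities must be read on the natural domains of definition.
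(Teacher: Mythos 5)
Your proposal is correct and takes essentially the same route as the paper: strict monotonicity comes from the Separation of Zeros Theorem via the same dependent/independent dichotomy (your injectivity and monotonicity steps are exactly the paper's equality and strict-inequality cases), and the inverse identities and continuity are precisely what the paper compresses into ``the definition of conjugate points and properties of strictly monotone functions.'' Your write-up is in fact slightly more careful than the paper's: applying Theorem~\ref{thsep} to the consecutive zeros of $x_{t_2}$ (rather than to those of $x_{t_1}$) also covers the boundary case $\rho_+(t_1)=\beta$, and you make explicit both the monotone-plus-interval-image continuity argument and the domain issues that the paper leaves implicit.
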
 


\begin{proof}
Let $t_2>t_1,\;x(t_1)=y(t_2)=0$\; ($x$ and $y$\, are solutions of {\rm (\ref{eq1})}). Suppose that $\rho_+(t_2)\leqslant \rho_+(t_1).$  The equality here, meaning that
$x(\rho_+(t_2))=y(\rho_+(t_1))=0$, contradicts to the definition of a conjugate point. Meanwhile, the strict inequality contradicts to Theorem \ref{thsep} (since $y$ would have two zeros between two consequtive zeros of $x.$) Consequently, $\rho_+(t_2)> \rho_+(t_1).$  

The proof for $ \rho_-$ is similar. The proof of the second statement follows from the definition of conjugate points and properties of strictly monotone functions.
\end{proof}

\begin{definition}
Differential equation (\ref{eq1}) is called {\it disconjugate} on an interval $J\subset I$ if any of its non-trivial solutions has at most one zero in $J$.
(We also say that $J$ is an interval of disconjugacy of equation  (\ref{eq1})).
\end{definition}

Thus, $J$ is an interval of disconjugacy of equation
 (\ref{eq1}) if and only if $\rho_{\pm}(a)\notin J$ for any $a\in J.$ 

In what follows, we write $L\in \gT(J)$ if equation (\ref{eq1}) is disconjugate on interval $J\subset I.$ 


Let $(a,b)\subset I,$ suppose that $a_n\to a+,\;b_n\to b-$ ($a_n\to -\infty,\;b_n\to +\infty$ in the case $a=\alpha=-\infty,\;b=\beta =+\infty$). Then
\begin{equation}\label{eqdop}
\gT\bigl((a,b)\bigr)=\bigcap\limits_{n=1}^{\infty}\gT\left(\bigl[a_n,\,b_n\bigr]\right)=\bigcap\limits_{n=1}^{\infty}\gT\left(\bigl(a_n,\,b_n\bigr)\right).
\end{equation}

As follows from the definitions of the property of disconjugacy and Cauchy's function, if 
equation (\ref{eq1}) is disconjugate on the interval $J=[a,b)\subset I,$ then $C(t,s)>0$ in the triangle
$a\leqslant s<t<b.$ Disconjugacy of equation (\ref{eq1}) on an interval $[a,b]$ implies the existence of the unique solution of problem (\ref{eq4}), so Green's function of this problem satisfies $G(t,s)<0$ on $(a,b)^2.$

\begin{theorem}[Comparison Theorem]
\label{thcomp} 
Let
$$
L_i\,y:= y''+p(t)\,y'+q_i(t)\,y=0,\quad i=1,2\quad\text{и}\quad q_1(t)\leqslant  q_2(t)\quad (t\in I).
$$ 
If $L_2\in \gT(J),$ then
$L_1\in \gT(J)$.
\end{theorem}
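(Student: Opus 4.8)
The plan is to argue by contraposition, comparing a solution of $L_1$ with a carefully chosen solution of $L_2$ through their cross-Wronskian, in the same spirit as the proof of Theorem \ref{thsep}. Suppose, contrary to the assertion, that $L_2\in\gT(J)$ while $L_1\notin\gT(J)$. Then some non-trivial solution $u$ of $L_1y=0$ has at least two zeros in $J$; since the zeros of a non-trivial solution are isolated, I may choose a pair of consecutive ones $a<b$, and after changing sign if necessary assume $u(t)>0$ on $(a,b)$ with $u(a)=u(b)=0$. I would first record the strict sign $u'(b)<0$: because $u>0$ on $(a,b)$ and $u(b)=0$ we have $u'(b)\leqslant0$, and $u'(b)=0$ would force $u\equiv0$ by uniqueness of the initial value problem.

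Next I would manufacture the comparison solution. Let $v$ be the solution of $L_2y=0$ with $v(a)=0$, $v'(a)=1$. This is exactly where the hypothesis $L_2\in\gT(J)$ enters: since $[a,b]\subset J$ and $v$ is non-trivial, $v$ has at most one zero in $J$, namely $a$, so $v(t)\ne0$ on $(a,b]$; as $v'(a)=1>0$ this gives $v(t)>0$ on $(a,b]$, and in particular $v(b)>0$.

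The heart of the argument is the cross-Wronskian $W:=u'v-uv'$. Differentiating and substituting $u''=-pu'-q_1u$ and $v''=-pv'-q_2v$ (note the \emph{common} coefficient $p$, which is why the theorem keeps $p$ fixed) gives, almost everywhere on $(a,b)$, the identity $W'=-pW+(q_2-q_1)uv$. With the positive, locally absolutely continuous integrating factor $\mu(t):=\exp\left(\int_a^t p(s)\,ds\right)$ this becomes $(\mu W)'=\mu\,(q_2-q_1)\,uv\geqslant0$, the inequality following from $\mu>0$, $q_2-q_1\geqslant0$, and $u,v>0$ on $(a,b)$. Since $W(a)=u'(a)v(a)-u(a)v'(a)=0$, integrating shows $\mu(t)W(t)\geqslant0$, hence $W(t)\geqslant0$, throughout $(a,b]$. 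But evaluating at $b$ gives $W(b)=u'(b)v(b)$, which is strictly negative by the signs $u'(b)<0$ and $v(b)>0$ established above. This contradiction completes the proof, and I would then extend to general $J$ by the localization identity (\ref{eqdop}) if needed.

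I expect the difficulties to be organizational rather than conceptual. The two places needing genuine care are the weak-regularity bookkeeping, i.e.\ checking that $\mu W$ is absolutely continuous so that $(\mu W)'\geqslant0$ almost everywhere really does force monotonicity under the paper's assumption of merely locally summable $p,q$, and securing the two \emph{strict} signs $u'(b)<0$ and $v(b)>0$ that make $W(b)$ genuinely negative. The latter is the true crux: the disconjugacy of $L_2$ is precisely what keeps the comparison solution $v$ from vanishing before $b$, so that the monotonicity of $\mu W$ is actually violated. Were $v$ permitted to vanish in $(a,b)$ the argument would collapse, which is also why the implication must run from $L_2$ to $L_1$ rather than in reverse.
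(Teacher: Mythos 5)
Your proof is correct, but it takes a genuinely different route from the paper's. The paper starts the same way — a solution $x$ of $L_1x=0$ with $x(a)=x(b)=0$, $x>0$ on $(a,b)$, and the $L_2$-solution $y$ with $y(a)=0$, $y'(a)=x'(a)>0$, so that $y(b)>0$ by disconjugacy — but instead of a Wronskian it forms the difference $h=y-x$, observes that $h$ solves the nonhomogeneous problem $(L_2h)(t)=\bigl(q_1(t)-q_2(t)\bigr)x(t)$, $h(a)=h'(a)=0$, and invokes the variation-of-constants representation $h(t)=\int_a^t C_2(t,s)\bigl(q_1(s)-q_2(s)\bigr)x(s)\,ds$ together with the positivity of the Cauchy function $C_2(t,s)$ of the disconjugate equation $L_2y=0$ (a fact recorded earlier in the paper) to conclude $h(b)\leqslant 0$, contradicting $h(b)>0$. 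Your cross-Wronskian argument replaces that kernel-positivity input by an elementary monotonicity statement: $(\mu W)'=\mu\,(q_2-q_1)\,uv\geqslant 0$ with $W(a)=0$ forces $W(b)\geqslant 0$, against $W(b)=u'(b)v(b)<0$. What each approach buys: yours is more self-contained, needing only uniqueness for the initial value problem and the absolute-continuity bookkeeping you flag (which does go through: $u'$, $v'$ and $\mu$ are locally absolutely continuous, hence $\mu W$ is absolutely continuous on $[a,b]$ and its a.e.\ derivative integrates); the paper's proof reuses machinery it has already built and which recurs throughout the article (positivity of Cauchy and Green kernels under disconjugacy), and that kernel method adapts more readily to nonhomogeneous comparisons and higher-order equations, where Wronskian identities are unavailable. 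One small remark: your closing appeal to (\ref{eqdop}) is unnecessary, since your argument is entirely local to $[a,b]\subset J$.
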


\begin{proof}
Let $L_2\in \gT(J).$ Suppose that $L_1\notin \gT(J).$  Then there exist a solution $x$ of equation $L_1x=0$, and points $a,b\in J,\;a<b,\;x(a)=x(b)=0,\;x(t)>0\;(t\in (a,b)).$
Let us define the solution $y$ of equation $L_2y=0$ by initial values $y(a)=0,\;y'(a)=x'(a)\,(>0).$ Since $L_2\in \gT(J),$ then $y(b)>0.$ Let $h:= y-x.$ 
Then $0=L_2y-L_1x=L_2h-(q_1-q_2)x, \;h(a)=0,h'(a)=0.$ Also, note that $h(b)>0.$ Thus, function $h$ is a solution to problem
$$
(L_2h)(t)=(q_1(t)-q_2(t))x(t),\quad h(a)=h'(a)=0,
$$
i.e.,
$$
h(t)=\int_a^tC_2(t,s)\bigl(q_1(s)-q_2(s)\bigr)x(s)\,ds\leqslant 0, 
$$ 
since Cauchy's functions $C_2(t,s)$ of equation $L_2y=0$ is positive for all $a\leqslant s<t\leqslant b,\; x(s)>0,$\\ $\bigl(q_1(s)-q_2(s)\bigr)\leqslant 0$ for all $a<s \leqslant t\leqslant b.$
The inequality $h(b)\leqslant 0$ contradicts to the inequality $h(b)>0$ obtained previously, so $L_1\in \gT(J).$
\end{proof}

\textbf{3.}
We use Theorem \ref{thsep} and Corollary \ref{incrrho} to prove the following statement.
\begin{theorem}\label{th21} 
Let $a$, $b \in I.$ Then 
$$
\gT\bigl((a,b)\bigr) = \gT\bigl((a,b]\bigr) = \gT\bigl([a, b)\bigr).
$$
\end{theorem}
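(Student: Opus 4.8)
The plan is to prove a chain of two-way implications, noting that one direction is immediate. Since $(a,b)\subset(a,b]$ and $(a,b)\subset[a,b)$, any solution with at most one zero in the larger interval has at most one zero in the smaller one; hence $\gT((a,b])\Rightarrow\gT((a,b))$ and $\gT([a,b))\Rightarrow\gT((a,b))$. The entire content of the theorem is therefore the two reverse implications $\gT((a,b))\Rightarrow\gT((a,b])$ and $\gT((a,b))\Rightarrow\gT([a,b))$, i.e.\ that adjoining a single endpoint cannot create a second zero.

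For the first of these I would argue by contradiction. Assume $L\in\gT((a,b))$ but $L\notin\gT((a,b])$, so some non-trivial solution $x$ vanishes at two points of $(a,b]$. Disconjugacy on $(a,b)$ forbids two zeros in the open interval, so exactly one of the two zeros is the endpoint $b$; call the interior zero $t_1\in(a,b)$. Because $t_1$ is the only zero of $x$ in $(a,b)$, we have $x\ne 0$ on $(t_1,b)$ together with $x(b)=0$, which is precisely the statement that $\rho_+(t_1)=b$.

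Now I would invoke the strict monotonicity of $\rho_+$ supplied by Corollary~\ref{incrrho}. Choosing any $s$ with $a<s<t_1$ (possible since $t_1>a$), strict monotonicity gives $\rho_+(s)<\rho_+(t_1)=b$, while always $\rho_+(s)>s>a$. Thus $s\in(a,b)$ and $\rho_+(s)\in(a,b)$, so the solution vanishing at $s$ has a second zero $\rho_+(s)$ inside $(a,b)$, contradicting $L\in\gT((a,b))$. This establishes $\gT((a,b))\Rightarrow\gT((a,b])$.

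The implication $\gT((a,b))\Rightarrow\gT([a,b))$ is symmetric: a failure produces a solution with a zero at $a$ and a unique interior zero $t_2\in(a,b)$, so that $\rho_-(t_2)=a$ (equivalently $\rho_+(a)=t_2$, via the inverse relation in Corollary~\ref{incrrho}); applying the strict monotonicity of $\rho_-$ to some $s\in(t_2,b)$ yields $a=\rho_-(t_2)<\rho_-(s)<s<b$, again placing a conjugate point inside $(a,b)$ and contradicting disconjugacy there. Combining the four implications gives the threefold equality. The only point requiring care, and the crux of the argument, is this use of strict monotonicity to move a conjugate point sitting exactly on an endpoint to a nearby conjugate point lying strictly inside the open interval; everything else is bookkeeping about which endpoint carries the extra zero.
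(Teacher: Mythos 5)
Your proof is correct and follows essentially the same route as the paper: argue by contradiction and use the strict monotonicity of the conjugate-point functions $\rho_{\pm}$ (Corollary \ref{incrrho}, itself resting on Theorem \ref{thsep}) to convert a solution vanishing at an endpoint and at one interior point into a solution with two zeros strictly inside $(a,b)$. Your version is in fact tighter: once $\rho_+(s)$ (resp.\ $\rho_-(s)$) is trapped inside the open interval you stop, whereas the paper, after establishing the analogous fact $a<a_1<c$ for its solution $x$ with $x(a_1)=x(c_1)=0$, goes on to construct an auxiliary solution $y$ with $y(a_1)=y(c_1)=1$ and to apply the separation theorem twice --- extra steps that are not needed, since $x$ already has two zeros in $(a,b)$.
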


\begin{proof} 
It suffices to check inclusion $\gT\bigl((a,b)\bigr) \subset \gT\bigl([a,b)\bigr),$ the rest follows by symmetry.

Let
$L\in \gT\bigl((a,b)\bigr).$ Suppose that $L\notin \gT\bigl([a,b)\bigr).$ Then there exist a solution $v$ of equation (\ref{eq1}) such that $v(a)=v(c)=0\; (a<c<b)$, $v(t)>0$ in $(a,c)$ (note that $v$ 
has at least two zeros in $[a,b),$ and at most one zero in $(a,b)$). By definition, $c=\rho_+(a)\quad \bigl((a=\rho_-(c)\bigr).$ Let us choose  $c_1 \in (c,b)$ so that $v(t)<0$ in $(c,c_1)$. We put $a_1=\rho _{-}(c_1).$ According to Corollary \ref{incrrho} $a<a_1<c.$ 
Let $x$ be the corresponding solution of equation (\ref{eq1}), i.e., $x(c_1)=x(a_1)=0,\quad x(t)>0$ in $(a_1,c_1).$
Since $L\in \gT\bigl([a_1,c_1]\bigr),$ there exists the unique solution $y$ of equation (\ref{eq1}) that satisfies $y(a_1)=y(c_1)=1.$ We have $y(t)>0$ on $[a_1,c_1]$  
(as a continuous function taking the same values at the endpoints of the interval, function $y$ can have only even number of zeros, hence, due to disconjugacy, none of them). We note that $y$ is linearly independent with $v$ and with $x.$ According to Theorem \ref{thsep} $y$ has exactly one zero in both intervals $(a,a_1)$ and $(c,c_1),$ that is, $y$ has two zeros in $(a,b).$ The latter contradicts to disconjugacy of equation (\ref{eq1}) on $(a,b)$.
\end{proof}

\begin{theorem}\label{thnefkrit}
$1.$ If there exists a solution of equation (\ref{eq1}) that is nowhere zero on $[a,b]\subset  I \\ \bigl((a,b)\subset I\bigr),$ 
then $L\in \gT\bigl([a,b]\bigr)\quad \bigl(L\in\gT\bigl((a,b)\bigr)\bigr).$

$2.$ If $L\in \gT([a,b],\;[a,b]\subset  I)\quad \bigl(L\in \gT([a,b)),\;[a,b)\subset I \bigr),$ then there exists a solution of equation (\ref{eq1}) that is nowhere zero on $[a,b]\quad 
\bigl((a,b)\bigr).$
\end{theorem}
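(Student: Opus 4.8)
The plan is to prove the two parts by different means: the sufficiency (Part~1) is a direct consequence of the Separation of Zeros Theorem, while the necessity (Part~2) will be handled by exhibiting an explicit nowhere-vanishing solution.

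For Part~1, suppose $u$ is a solution that never vanishes on the interval, so that $u$ has constant sign, say $u>0$. Arguing by contradiction, if $L$ were not disconjugate on the interval then some nontrivial solution $x$ would possess at least two zeros there, and hence two consecutive zeros $c<d$ lying in the interval, with $x\ne 0$ on $(c,d)$ (the zeros are isolated since a common zero of $x$ and $x'$ would force $x\equiv 0$). Now $u$ cannot be a scalar multiple of $x$, since $x(c)=0$ while $u(c)\ne 0$; thus $u$ and $x$ are linearly independent, and Theorem~\ref{thsep} applied on $[c,d]$ forces $u$ to have a zero in $(c,d)$, contradicting $u\ne 0$. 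This argument is verbatim the same whether the interval is $[a,b]$ or $(a,b)$, which settles Part~1 in both cases.

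For Part~2 on the closed interval $[a,b]$, I would introduce the solutions $u_a$ and $u_b$ determined by $u_a(a)=0,\ u_a'(a)=1$ and $u_b(b)=0,\ u_b'(b)=-1$. Since $L\in\gT\bigl([a,b]\bigr)$, neither can have a second zero in $[a,b]$; in particular $u_a(b)\ne 0$ and $u_b(a)\ne 0$, and by the choice of signs $u_a>0$ on $(a,b]$ and $u_b>0$ on $[a,b)$. The candidate solution is then $u:=u_a+u_b$. It is positive on the open interval as a sum of two positive functions, and at the endpoints $u(a)=u_b(a)>0$ and $u(b)=u_a(b)>0$; hence $u>0$ throughout $[a,b]$.

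Finally, for the parenthetical assertion $L\in\gT\bigl([a,b)\bigr)$, the single solution $u_a$ already suffices: its only zero in $[a,b)$ is the one at $a$, so $u_a$ cannot vanish anywhere in $(a,b)$, and, being positive immediately to the right of $a$, it stays positive on all of $(a,b)$. I expect the only genuinely delicate point to be keeping track of which endpoints are included: it is exactly the closedness of $[a,b]$ that excludes $u_a(b)=0$ and $u_b(a)=0$ and thereby makes the superposition $u_a+u_b$ strictly positive at both ends, whereas under the weaker hypothesis $\gT\bigl([a,b)\bigr)$ only positivity on the open interval can be extracted, which is why a single solution is enough there.
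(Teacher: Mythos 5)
Your proposal is correct and follows essentially the same route as the paper: Part 1 is deduced from Theorem \ref{thsep} (the paper simply states this is immediate, while you spell out the contradiction with consecutive zeros), and Part 2 uses exactly the paper's construction, the solutions with initial data $y_1(a)=0,\,y_1'(a)=1$ and $y_2(b)=0,\,y_2'(b)=-1$, taking their sum $y_1+y_2$ for the closed interval and $y_1$ alone for the half-open case.
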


\begin{proof}
1. The statement follows immediately from Theorem \ref{thsep}.

2. Let $J=[a,b],\;[a,b]\subset  I.$ Let us determine solutions $y_1(t)$ and $y_2(t)$ by initial conditions $y_1(a)=0,\, y_1'(a)=1$ and
$y_2(b)=$~$0,$ $y_2'(b)=-1.$ 
Since $L\in\gT([a, b]),$ one has 
$$
y_1(t)>0\quad (t\in (a, b]), \quad y_2(t)>0\quad (t\in [a, b)).
$$
The solution $y_1(t) + y_2(t)$ is the one required.

If $L\in \gT([a,b)),$ then the required solution is $y_1.$ 
\end{proof}

It is possible that there are no solutions preserving sign on $[a,b)$. For instance,
$${L:=\frac{d^2}{dt^2}+1}\in\gT([0, \pi)).$$ However, any solution of equation $Lx=0$ has precisely one zero in $[0, \pi)$.

\textbf{4. }
Below we prove two theorems which demonstrate the role of disconjugacy in the theory of differential equation (\ref{eq1}). This is Factorization Theorem (i.e., the theorem on representation of $L$ as the product of linear differential operators of the first order \cite{pol24},\cite{mam31}) and generalized Rolle's Theorem (\cite[p. 63]{ps782}).

\begin{theorem}[Factorization Theorem]\label{thPM} 
Suppose $J=[a,b]\subset I$ or $J=(a,b)\subset I.$ One has $L\in \gT(J),$ if and only if there exist functions $h_i, i=0,1,2$ such that $h_0',h_1$ are absolutely continuous,
$h_2$ is summable on $J,\;h_i(t)>0,\;h_0(t)h_1(t)h_2(t)\equiv 1$ on $J$, and
\begin{equation}
\label{PMeq1}
(Lx)(t)=h_2(t)\frac{d}{dt}h_1(t)\frac{d}{dt}h_0(t)x(t)\quad (t\in J,\quad x'\;\text{absolutely continuous on}\; J).
\end{equation}
\end{theorem}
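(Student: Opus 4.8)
The plan is to prove the two implications separately, treating the construction of the factorization (the ``only if'' direction) as the substantive part.

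For the ``if'' direction, I would assume a factorization (\ref{PMeq1}) exists and deduce $L\in\gT(J)$ directly. If $x$ is a nontrivial solution of $Lx=0$, then since $h_2>0$ the factored form forces $\frac{d}{dt}\bigl(h_1\frac{d}{dt}(h_0x)\bigr)=0$ almost everywhere, so the (locally absolutely continuous) function $h_1(h_0x)'$ equals a constant $c_1$ on $J$. Suppose $x$ had two zeros $t_1<t_2$ in $J$. Since $h_0>0$, the continuous function $h_0x$ vanishes at $t_1$ and $t_2$, so by Rolle's theorem $(h_0x)'$ vanishes at some interior point; as $h_1>0$ this gives $c_1=0$, whence $(h_0x)'\equiv0$ and $h_0x$ is constant. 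Being zero at $t_1$, it is identically zero, and $h_0>0$ forces $x\equiv0$, a contradiction. Hence every nontrivial solution has at most one zero, i.e. $L\in\gT(J)$.

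For the ``only if'' direction, I would first produce a fixed-sign solution. By Theorem \ref{thnefkrit} (using Theorem \ref{th21} to pass from $(a,b)$ to $[a,b)$ in the open case), disconjugacy on $J$ yields a solution $u$ of (\ref{eq1}) with $u(t)>0$ throughout $J$. With $u$ in hand I would set, for a fixed $t_0\in J$,
$$h_0:=\frac{1}{u},\qquad h_1:=u^2\exp\Bigl(\int_{t_0}^t p(s)\,ds\Bigr),\qquad h_2:=\frac{1}{u}\exp\Bigl(-\int_{t_0}^t p(s)\,ds\Bigr).$$
These are positive and satisfy $h_0h_1h_2\equiv1$, while their regularity is inherited from $u$: since $u$ is a solution, $u\in C^1$ with $u'$ locally absolutely continuous, so $h_0\in C^1$ with $h_0'=-u'/u^2$ locally absolutely continuous; moreover $\exp\bigl(\int_{t_0}^t p\bigr)$ is locally absolutely continuous because $p$ is locally summable, so $h_1$ is locally absolutely continuous and $h_2$ is summable.

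It remains to verify that these choices reproduce $L$. Expanding the right-hand side of (\ref{PMeq1}) gives
$$h_2\frac{d}{dt}\,h_1\frac{d}{dt}\,h_0x=(h_0h_1h_2)\,x''+h_2(h_0h_1'+2h_0'h_1)\,x'+h_2(h_0''h_1+h_0'h_1')\,x.$$
The coefficient of $x''$ is $1$ by $h_0h_1h_2\equiv1$, and my choice of $h_1$ was made precisely so that the coefficient of $x'$ reduces to $(\ln h_1)'+2(\ln h_0)'=p$. Rather than grind out the coefficient of $x$, I would argue it equals $q$ automatically: writing $\tilde L$ for the factored operator, $\tilde L$ and $L$ already agree in their $x''$ and $x'$ coefficients, so $\tilde L-L$ is multiplication by a function; since $\tilde L u=h_2\frac{d}{dt}h_1\frac{d}{dt}(h_0u)=h_2\frac{d}{dt}h_1\frac{d}{dt}(1)=0=Lu$ and $u>0$, that function vanishes, giving $\tilde L=L$. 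I expect the only delicate step to be the bookkeeping of regularity under the weak (locally summable) hypotheses on $p,q$—confirming that $h_0'$ and $h_1$ are absolutely continuous and $h_2$ is summable—together with invoking Theorem \ref{thnefkrit} correctly in the open-interval case via Theorem \ref{th21}; the algebraic identity itself is routine once the ``$\tilde L u=0$'' device bypasses direct computation of the zeroth-order coefficient.
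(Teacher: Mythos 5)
Your proof is correct, and its core construction coincides with the paper's: by Abel's formula the Wronskian $w=[y,u]$ used in the paper equals a positive constant times $e^{-\int_{t_0}^t p(s)\,ds}$, so your triple $h_0=1/u$, $h_1=u^2e^{\int_{t_0}^t p}$, $h_2=u^{-1}e^{-\int_{t_0}^t p}$ is exactly the paper's $h_0=1/y$, $h_1=y^2/w$, $h_2=w/y$ written without mentioning a second solution. Where you genuinely diverge is in the two verification steps. For necessity, the paper takes a second solution $u$ with $[y,u]>0$ and concludes $\widehat L=L$ from the fact that two monic second-order operators sharing the fundamental system $\{y,u\}$ must coincide; you instead match the $x''$ and $x'$ coefficients by direct expansion and then eliminate the zeroth-order discrepancy using only the single positive solution (since $\tilde Lu=Lu=0$ and $u>0$). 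Your route avoids introducing a second solution and the implicit uniqueness claim for operators with a prescribed fundamental system, at the cost of some explicit computation; the paper's is computation-free but leans on that (standard, unproved) uniqueness fact. For sufficiency, the paper merely observes that $1/h_0$ is a positive solution and invokes Theorem \ref{thnefkrit}, which rests on Sturm separation, whereas your argument (the function $h_1\frac{d}{dt}(h_0x)$ is constant; a second zero of $h_0x$ forces that constant to vanish by Rolle, whence $x\equiv 0$) is self-contained and more elementary — in effect you reprove disconjugacy from the factorization directly. One caveat you share with the paper: appealing to Theorem \ref{thnefkrit} via Theorem \ref{th21} in the case $J=(a,b)$ is literally justified only when $a,b\in I$; if $(a,b)=I$ with an endpoint of $I$ excluded, a limiting argument via (\ref{eqdop}) is needed to produce the positive solution, a gap present in the paper's own proof as well.
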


\begin{proof} 
Necessity. Let $L\in \gT(J).$ According to Theorem \ref{thnefkrit} there exists a solution $y$ of equation (\ref{eq1}) such that $y(t)>o$ on $J.$ Let $u$ be a solution of equation (\ref{eq1})
linearly independent with $y$ and such that $w(t):= [y,u](t)>0.$ Let us consider the following linear differential operator of the second order
$$
\widehat Lx:= \frac{w}{y}\frac{dt}{dt}\frac{y^2}{w}\frac{dt}{dt}\frac{x}{y}.
$$
Since functions $y,u$ form a fundamental system of solutions of both equation (\ref{eq1}) and equation $\widehat Lx=0,$  the top coefficient in $\widehat L$ is
equal to $\frac{w}{y}\frac{y^2}{w}\frac{1}{y}\equiv 1,$ then $Lx\equiv \widehat Lx.$ These conditions are satisfied if $h_0=\frac{1}{y},\;h_1=\frac{y^2}{w},\;h_2=\frac{w}{y}.$

Sufficiency. Suppose that we have identity (\ref{PMeq1}). Then function $y(t):=\frac{1}{h_0(t)}>0\;(t\in J)$ is a solution of equation (\ref{eq1}) satisfying conditions of Theorem \ref{thnefkrit} which, in turn, implies that $L\in \gT(J).$
\end{proof}

\begin{theorem}[Generalized Rolle's Theorem]
\label{thRol} 
Let $J=[a,b]\subset I$ or $J=(a,b)\subset I,\quad L\in \gT(J).$ Suppose that function $u$ has absolutely continuous on $J$ first derivative, and function $Lu$ is continuous. If there exist $m$ ($m\geqslant 2$) geometrically distinct zeros of $u$ in $J$, then $Lu$ has at least $m-2$ geometrically distinct zeros in $J$.
\end{theorem}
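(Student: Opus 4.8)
The plan is to prove the generalized Rolle's theorem by induction on the number of zeros $m$, reducing the case of $m$ zeros to the case of $m-1$ zeros by constructing an auxiliary function that ``kills'' one zero of $u$ while losing at most one zero of $Lu$. The base case $m=2$ asserts only that $Lu$ has at least $0$ zeros, which is vacuous, so the real content begins at $m=3$.

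For the inductive step, suppose $u$ has zeros $t_1<t_2<\dots<t_m$ in $J$. I would isolate the first two, $t_1<t_2$, and use disconjugacy to remove them. The key tool is the solution $v$ of the homogeneous equation $Lv=0$ with $v(t_1)=0$ chosen so that $v$ is linearly independent from whatever singles out $t_2$; by disconjugacy $v$ has no further zero between $t_1$ and the next conjugate point, and in particular I can arrange $v(t)>0$ on $(t_1,t_2]$ using the positive nowhere-zero solution $y$ guaranteed by Theorem \ref{thnefkrit}. The cleanest route is to set $w(t):=u(t)/y(t)$, where $y>0$ solves $Ly=0$ on $J$; then $w$ has exactly the same zeros $t_1,\dots,t_m$ as $u$ (since $y$ never vanishes), and the factorization
$$
(Lu)(t)=h_2(t)\frac{d}{dt}\,h_1(t)\,\frac{d}{dt}\,w(t)
$$
from Theorem \ref{thPM} (with $h_0=1/y$, so $h_0u=w$) expresses $Lu$ through two nested first-order differentiations of $w$. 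This is the central idea: disconjugacy lets me factor $L$ so that applying $L$ to $u$ becomes a composition of two ordinary derivatives interleaved with positive weight factors.

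Now I would apply the classical Rolle's theorem twice through this factorization. Since $w$ has $m$ zeros in $J$, the function $g_1(t):=h_1(t)\,w'(t)$ — up to the intervening weight — inherits at least $m-1$ zeros by ordinary Rolle applied on each of the $m-1$ subintervals between consecutive zeros of $w$ (the positive factor $h_1$ does not create or destroy zeros, and $w$ is differentiable because $u$ has an absolutely continuous first derivative and $y$ is smooth). Applying Rolle once more, the derivative $\bigl(h_1 w'\bigr)'$, and hence $h_2\cdot\bigl(h_1w'\bigr)'=Lu$ after multiplying by the positive factor $h_2$, has at least $m-2$ zeros in $J$. The hypothesis that $Lu$ is continuous ensures these zeros are genuine points where the continuous function $Lu$ vanishes, so they are geometrically distinct.

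The main obstacle I anticipate is regularity bookkeeping at the level where Rolle's theorem is invoked. Ordinary Rolle requires differentiability on open subintervals and continuity on closed ones; here $h_1w'$ is only absolutely continuous and $h_2$ is merely summable, so I must argue that the vanishing of the \emph{continuous} representative $Lu$ is what carries the zeros, rather than naively differentiating an absolutely continuous function and hoping its derivative vanishes pointwise. The careful statement is that between two zeros of the continuous function $h_1w'$ its absolutely continuous antiderivative structure forces $Lu$ (the continuous function equal a.e. to $h_2(h_1w')'$) to change behaviour in a way that produces a genuine zero; making this rigorous requires invoking continuity of $Lu$ together with the intermediate value theorem rather than pointwise differentiation, and checking that $h_1 w'$ is itself continuous so that its $m-1$ zeros from the first Rolle step are honest points. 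Handling this passage between the a.e.\ identity and the pointwise zero count is where I expect to spend the most care.
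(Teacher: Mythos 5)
Your proposal is correct and, despite the opening talk of induction (which your actual argument never uses), it follows essentially the same route as the paper: invoke the Factorization Theorem (Theorem \ref{thPM}) with $h_0=1/y$ so that $Lu=h_2\frac{d}{dt}h_1\frac{d}{dt}(h_0u)$, note that the positive factors $h_0,h_1,h_2$ neither create nor destroy zeros, and apply Rolle's theorem twice. Your closing paragraph on regularity --- replacing pointwise differentiation of the merely absolutely continuous function $h_1(h_0u)'$ by the a.e.\ identity combined with continuity of $Lu$ and the intermediate value theorem --- is a genuine refinement of a point the paper's two-line proof silently glosses over.
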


\begin{proof} 
According to Theorem \ref{thPM} one has representation (\ref{PMeq1}). Since function $h_0u$ has $m$ geometrically distinct zeros in $J$, by Rolle's Theorem both $\frac{d}{dt}h_0u$ and $h_1\frac{d}{dt}h_0u$ have at least $m-1$ geometrically distinct zeros in $J$. Also according to Rolle's Theorem $Lu$ has at least $m-2$ geometrically distinct zeros in $J$.
\end{proof}

\textbf{5.}
In this section we provide some criteria for disconjugacy based on Theorems \ref{thcomp} and \ref{thnefkrit}.

\begin{criteria}\label{const}
Let $I=(-\infty,\,+\infty)$, $p(t)\equiv p=\const$, $q(t)\equiv q=\const$. Then differential equation $(\ref{eq1})$ having constant coefficients $p(t) \equiv p$, $q(t) \equiv q$ is disconjugate on $I$ if and only if the roots of its characteristic equation
 $\lambda ^2+p\lambda +q=0$ are real.
\end{criteria}

\begin{proof}
Let $\nu$ be a real root of the characteristic equation. Then function $x(t):= e^{\nu t}$ is a solution of equation (\ref{eq1})
nowhere vanishing on $I.$ 
According to the first statement of Theorem \ref{thnefkrit}, equation (\ref{eq1}) is disconjugate on $I.$

Conversely, let (\ref{eq1}) be disconjugate on $I.$ Suppose that the characteristic equation has roots $\gamma \pm\delta i, \delta\ne 0.$ Then solution
$x(t)=e^{\gamma t}\cos\,\delta t$ of equation (\ref{eq1}) has infinitely many zeros in $I$, which contradicts to its disconjugacy on $I.$
\end{proof}

Let us consider equation
\begin{equation}\label{euler1}
x''+\frac{p}{t}x'+q(t)x=0\quad (t\in I(0,+\infty),\quad\text{where}\;p=\const .
\end{equation}

\begin{criteria}\label{euler}
If $q(t)\leqslant \frac{(p-1)^2}{4t^2},$ then equation $(\ref{euler1})$ is disconjugate on $I:= (0,+\infty).$
\end{criteria}

\begin{proof}
Euler equation $x''+\frac{p}{t}x'+\frac{(p-1)^2}{4t^2}x=0$ is disconjugate on $I$ by Theorem \ref{thnefkrit} since it has solution $x(t)=t^{\frac{1-p}{2}},$ 
which is nowhere equal to zero on $I$ (let us also take into account (\ref{eqdop})). 
According to Theorem \ref{thcomp} equation (\ref{euler1}) is also disconjugate on this interval.
\end{proof}

The next sufficient condition of disconjugacy is due to A.M.~Lyapunov
\cite{cop71}.

\begin{criteria}\label{ljap}
Let $p(t)\equiv 0, \quad q(t) \geqslant 0$ and $\int_a^b q(t)dt \leqslant \frac {4}{b-a}.$ Then $L \in \gT([a, b]).$
\end{criteria}

\begin{proof}
Suppose that equation (\ref{eq1}) possesses a non-trivial solution $y(t)$ having two zeros in $[a,b].$
Since $y$ can not have multiple roots, we may assume, without loss of generality, that
\begin{equation}\label{ljap1}
y(a) = y(b) = 0. 
\end{equation}
Function $y$, as a solution of boundary value problem (\ref{eq1}), (\ref{ljap1}), satisfies the following integral equation 
\begin{equation}\label{ljap2}
y(t)= -\int_a^b G(t,s)q(s)y(s) ds, 
\end{equation}
where
$$
G(t,s)= 
\begin {cases} 
-\dfrac {(b-t)(s-a)}{b-a},&\text{if}\; a \leqslant s < t ,\\ 
-\dfrac {(t-a)(b-s)}{b-a},&\text{if}\; t\leqslant s \leqslant b 
\end {cases}
$$
is Green's function of equation $y''=0$ with boundary conditions (\ref{ljap1}). It is immediate that for $t \ne s$ 
\begin{equation}\label{ljap3}
|G(t, s)| < \frac {(b-s)(s-a)}{b-a}.
\end{equation}
Let $\max\limits_{s \in [a, b]}|y(s)| = |y(t^*)|$. Then (\ref{ljap2}) and (\ref{ljap3}) 
\begin {multline*}
|y(t^*)|= \left | \int_a^b G(t^*, s)q(s)y(s)ds \right | \leqslant \int_a^b \left |G(t^*, s) \right ||y(s)|q(s)ds < \\ 
<|y(t^*)|  \int_a^b \frac {(b-s)(s-a)q(s)}{b-a}ds  \leqslant \frac {b-a}{4} \int_a^b q(s)ds 
\end {multline*} 
since $(b-s)(s-a)\! \leqslant \!\dfrac {(b-a)^2}{4}$ for $s\! \in \![a, b].$ Therefore, $1\!\! < \!\!\dfrac {b-a}{4} \displaystyle{\int_a^b} q(s)ds,$ which contradicts to the conditions of the theorem.
\end{proof}

\begin{corollary}\label{slthljap}
If $p(t)\equiv 0,$
$$
\int_a^b q_{+}(t)\,dt\leqslant \frac{4}{b-a},
$$
then $L\in \gT([a,b])$ $\left( q_{+}(t)=q(t)\quad\text{if}\quad q(t)>0, \quad q_{+}(t)=0\quad\text{if}\quad q(t)\leqslant 0 \right).$
\end{corollary}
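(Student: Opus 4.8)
The plan is to deduce this corollary directly from Lyapunov's Criterion \ref{ljap} together with the Comparison Theorem \ref{thcomp}, with essentially no new computation. The key observation is that $q_+$ is itself a legitimate coefficient: it is nonnegative by definition, locally summable as the positive part of the summable function $q$, and it dominates $q$ pointwise, $q(t)\leqslant q_+(t)$ for all $t\in[a,b]$.

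First I would set up the comparison equation $L_2 y:=y''+q_+(t)\,y=0$ alongside the given equation, which under $p\equiv 0$ reads $L_1 x:=x''+q(t)\,x=0$. Since $q_+\geqslant 0$ and, by hypothesis, $\int_a^b q_+(t)\,dt\leqslant \frac{4}{b-a}$, the coefficient $q_+$ satisfies exactly the assumptions of Criterion \ref{ljap}. Applying that criterion to $L_2$ yields $L_2\in\gT([a,b])$.

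Then I would invoke the Comparison Theorem \ref{thcomp} with the pair $(q_1,q_2)=(q,q_+)$. The pointwise inequality $q_1\leqslant q_2$ holds by the very definition of $q_+$, and we have just established $L_2\in\gT([a,b])$; the theorem therefore forces $L_1=L\in\gT([a,b])$, which is the desired conclusion.

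There is no genuine obstacle here: the content is carried entirely by the two results already proved, and the strengthening from $q$ to $q_+$ is precisely the kind of monotone improvement the Comparison Theorem is designed to exploit. The only point meriting a word of care is the verification that $q_+$ meets the standing regularity hypothesis, so that both the auxiliary equation and the Comparison Theorem apply; this is immediate, since $q_+=\frac{1}{2}\bigl(|q|+q\bigr)$ is summable whenever $q$ is.
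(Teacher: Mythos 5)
Your proof is correct and follows exactly the paper's own argument: apply Lyapunov's Criterion \ref{ljap} to the nonnegative coefficient $q_{+}$ to get $\frac{d^2}{dt^2}+q_{+}\in \gT([a,b])$, then use the Comparison Theorem \ref{thcomp} with $q\leqslant q_{+}$ to conclude $L\in \gT([a,b])$. Your added remark that $q_{+}=\frac{1}{2}\bigl(|q|+q\bigr)$ inherits summability from $q$ is a small point of care the paper leaves implicit, but the substance is identical.
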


\begin{proof}
As we already proved, $L_{+}:= \frac{d^2}{dt^2}+q_{+}\in \gT([a,b])$. At the same time, since $q(t)\leqslant q_{+}(t)$, one has $L\in \gT([a,b]).$
\end{proof}

\begin{remark}\label{z1} 
We note that constant $4$ is formulation of Criterion $\ref{ljap}$ is sharp. 
\end{remark}

The latter follows from the next example.
Suppose that function $v$ is twice continuously differentiable on $[0,1]$ and
$$
v(t)=t \quad (0 \leqslant t \leqslant \frac{1}{2}-\delta),\quad
v(t)=1-t \quad \text{ if } \quad t>\frac{1}{2}+ \delta, 
$$
$$
v(t)>0, \quad v''(t)<0 \quad \text{ if } \quad \frac{1}{2}-\delta < t < \frac{1}{2}\!+\! \delta.
$$
Define
$$
q(t)= 
\begin {cases} 
-\frac {v''(t)}{v(t)},&\text{if} \; t \in (0, 1) ,\\ 
0,&\text{if} \;t = 0,  \; t = 1. 
\end {cases}
$$
Clearly, $q$ is continuous, $q(t)\! \geqslant \!0$ on $[0, 1];\; L\! := \!\frac {d^2}{dt^2} + q(t) \notin \gT([0, 1]),$ since equation 
$Ly=0$ has solution $y=v(t)$ which has two zeros in $[0,1].$ However,  
$$
\frac {v''}{v} = \left ( \frac {v'}{v} \right )' + {\left (\frac {v'}{v} \right )}^2 \geqslant \left ( \frac {v'}{v} \right )',
$$
so the value of integral
$$
\int_0^1 q(t)dt = -\int_{\frac{1}{2}-\delta}^{\frac{1}{2}+\delta} \left ( \frac {v'}{v} \right )'dt= 
\left. -\frac {v'}{v}  \right |_{\frac{1}{2}-\delta}^{\frac{1}{2}+\delta} = \frac{4}{1-2\delta}
$$ 
can be made arbitrarily close to $4$ by choosing sufficiently small $\delta.$

\vspace*{4mm}

\textbf{6.}
Criterion \ref{thnefkrit} is an example of a \textit{non-effective} criterion of disconjugacy, i.e., a criterion formulated rather in terms of solutions of equation
(\ref{eq1}) than in terms of the coefficients of this equation. 

Let us now give a necessary and sufficient condition of disconjugacy of equation
(\ref{eq1}). This criterion may be called \textit{semi-effective}
\cite{lev69} (it is effective as a necessary condition, but non-effective as a sufficient condition).
Although it is not expressed in terms of the coefficients of equation (\ref{eq1}), it can be used to obtain sufficient conditions of disconjugacy formulated in terms of the coefficients of the equation. 
The author of this criterion is Valle-Poussin \cite{vp29}.
\begin{theorem}\label{thVP}   
Let $[a, b] \subset I.$ One has $L \in \gT([a, b])$
if and only if there exists function
$v$ having first derivative absolutely continuous on $[a, b]$ and such that
\begin{equation}\label{vp1}
v(t) > 0\quad  (a < t \leqslant b), \qquad  Lv \leqslant 0 \quad \text{a.e. on}\; [a,b]. 
\end{equation} 
\end{theorem}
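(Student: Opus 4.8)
The plan is to prove the two implications separately, with essentially all of the content residing in the sufficiency.

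For necessity, suppose $L\in\gT([a,b])$. Then the second part of Theorem \ref{thnefkrit} furnishes a solution of (\ref{eq1}) that is nowhere zero on $[a,b]$; after multiplying by $-1$ if necessary I may take it positive, so this solution $v$ satisfies $v(t)>0$ on all of $[a,b]$ (hence on $(a,b]$) together with $Lv\equiv 0\leqslant 0$. Thus $v$ witnesses (\ref{vp1}), and this direction is immediate.

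For sufficiency I would argue by contradiction using a Lagrange (generalized Wronskian) identity. Assume $L\notin\gT([a,b])$, so that some nontrivial solution $x$ of (\ref{eq1}) has two zeros in $[a,b]$. Since the zeros of a nontrivial solution are simple, I may select two of them $\alpha<\beta$ with $x(\alpha)=x(\beta)=0$ and $x(t)>0$ on $(\alpha,\beta)$; consequently $x'(\alpha)>0$ and $x'(\beta)<0$. Note $\alpha\geqslant a$ and $a<\beta\leqslant b$, so $v(\beta)>0$, while $v(\alpha)\geqslant 0$ (with equality possible only in the borderline case $\alpha=a$, by continuity of $v$). The key device is the quantity $W:=vx'-xv'$ together with the integrating factor $\mu(t):=\exp\!\left(\int_a^t p(s)\,ds\right)>0$. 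A direct computation yields the Lagrange identity $W'+pW=v\,Lx-x\,Lv$, hence $(\mu W)'=\mu\,(v\,Lx-x\,Lv)$ almost everywhere. On the solution $x$ one has $Lx=0$, so $(\mu W)'=-\mu\,x\,Lv\geqslant 0$ a.e. on $(\alpha,\beta)$, because $\mu>0$, $x\geqslant 0$ there, and $Lv\leqslant 0$. Since $\mu W$ is absolutely continuous, it is therefore nondecreasing on $[\alpha,\beta]$. Evaluating at the endpoints, where $x$ vanishes, gives $W(\alpha)=v(\alpha)x'(\alpha)\geqslant 0$ and $W(\beta)=v(\beta)x'(\beta)<0$; multiplying by the positive factor $\mu$ produces $\mu(\beta)W(\beta)<0\leqslant\mu(\alpha)W(\alpha)$, which contradicts monotonicity. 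Hence $L\in\gT([a,b])$.

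The main obstacle to anticipate is the left endpoint: the hypothesis only requires $v>0$ on $(a,b]$, so $v(a)$ may vanish. A naive approach through the Riccati substitution $u=v'/v$ (equivalently, replacing $q$ by $q-Lv/v$ and invoking Theorems \ref{thcomp} and \ref{thnefkrit} applied to the positive solution $v$) breaks down precisely at $a$, where $Lv/v$ need not be summable, and at best delivers disconjugacy on $(a,b]$ rather than on the closed interval $[a,b]$. The Wronskian argument above is designed to sidestep this: it never divides by $v$, and it uses positivity of $v$ only through the boundary signs $v(\alpha)\geqslant 0$ and $v(\beta)>0$. This is exactly what allows a single computation to cover every position of the pair of zeros, including the critical configuration $\alpha=a$, so no separate treatment of the endpoint is needed.
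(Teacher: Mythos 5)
Your proof is correct, but the sufficiency direction takes a genuinely different route from the paper's. The paper first disposes of the possible vanishing of $v$ at $a$ by a perturbation: if $v(a)=0$ it replaces $v$ by $\widetilde v=v+\varepsilon u$, where $u$ solves (\ref{eq1}) with $u(a)=1$, $u'(a)=0$, noting that $L\widetilde v=Lv\leqslant 0$ and $\widetilde v>0$ on $[a,b]$ for small $\varepsilon$; it then forms the auxiliary equation $Mx:=x''+px'-\frac{v''+pv'}{v}\,x=0$, which is disconjugate by Theorem \ref{thnefkrit} because $v$ is a positive solution, and observes that $Lv\leqslant 0$ says exactly that $-\frac{v''+pv'}{v}\geqslant q$ a.e., so the Comparison Theorem \ref{thcomp} gives $L\in\gT([a,b])$. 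Your argument instead inlines a Sturm--Picone type comparison: the identity $(\mu W)'=\mu\,(v\,Lx-x\,Lv)$ with $W=vx'-xv'$, $\mu=\exp\int_a^t p$, applied between two consecutive zeros of a putative solution, yields monotonicity of $\mu W$ against the strict sign change $W(\alpha)\geqslant 0>W(\beta)$. The trade-offs are real: the paper's proof is shorter because it reuses Theorems \ref{thnefkrit} and \ref{thcomp} (whose proof in turn rests on positivity of the Cauchy function), but it must divide by $v$, which forces the $\varepsilon$-perturbation step; your proof never divides by $v$, so the case $v(a)=0$ requires no special treatment, and it is self-contained and elementary --- at the price of essentially reproving a comparison theorem that the paper already has on the shelf. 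Your closing remark about the Riccati-type reduction failing when $v(a)=0$ is accurate as a description of the obstacle, but note that the paper's perturbation trick overcomes exactly that obstacle while staying within the comparison framework, so the division-based approach is not actually doomed, just in need of the extra step. All steps in your argument check out: the zeros of a nontrivial solution are simple, hence isolated, so consecutive zeros $\alpha<\beta$ exist with $x>0$ in between and $x'(\alpha)>0$, $x'(\beta)<0$; and $\mu W$ is absolutely continuous with $(\mu W)'=-\mu x\,Lv\geqslant 0$ a.e. on $(\alpha,\beta)$, so the endpoint evaluation indeed gives a contradiction.
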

\begin{proof}
Necessity follows from Theorem \ref{thnefkrit}. Let us show that the conditions of the theorem are sufficient. In the case $v(a) = 0$ let us put
$\widetilde v(t) = v(t)+\varepsilon u(t)$, where $\varepsilon > 0,$ and $u(t)$ is the solution of equation (\ref{eq1}) with initial conditions
$u(a)=1, \; u'(a)=0$. For $\varepsilon$ sufficiently small we have $\widetilde v(t) > 0$ on $[a, b].$
Hence we may assume, without loss of generality, that
$v(t) >0$ on $[a, b].$  Let us consider equation
\begin{equation}\label{vp2}
Mx:= x''+px'-\frac{v''+pv'}{v}x=0.
\end{equation} 
According to Theorem \ref{thnefkrit} $M \in \gT([a, b])$ (since equation (\ref{vp2}) has solution $v$ positive on $[a, b]$). 
By our assumptions $v''(t) + p(t)v'(t)+q(t)v(t) \leqslant 0,$ i.e., $- \frac {v''(t)+p(t)v'(t)}{v(t)} \geqslant q(t),$  a.e. on $[a,b] .$  
The statement of the theorem now follows from Theorem \ref{thcomp}.
\end{proof}

The proof of the next statement follows the same argument.

\begin{theorem}\label{VP'} 
If there exists function $v$ having first derivative absolutely continuous on $[a, b)$ ans such that 
\begin{equation}\label{vp3}
v(t) > 0\quad (a < t < b ), \qquad Lv \leqslant 0 \quad \text{ a.e. on}\quad (a,b), 
\end{equation} 
then  $L \in \gT([a, b)).$
\end{theorem}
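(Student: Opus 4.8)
The plan is to mirror the proof of Theorem \ref{thVP} almost verbatim, adapting it to the half-open interval $[a,b)$ and exploiting the earlier result (Theorem \ref{th21}) that disconjugacy on $[a,b)$ is equivalent to disconjugacy on open subintervals. First I would observe that condition (\ref{vp3}) gives a strictly positive function $v$ on the open interval $(a,b)$, so I may construct the auxiliary equation
\begin{equation*}
Mx:= x''+px'-\frac{v''+pv'}{v}x=0,
\end{equation*}
exactly as in the proof of Theorem \ref{thVP}. Since $v>0$ on $(a,b)$ and $v$ is a solution of $Mx=0$, the first statement of Theorem \ref{thnefkrit} (in its open-interval form) yields $M\in\gT\bigl((a,b)\bigr)$.

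Next I would compare $M$ with $L$. By hypothesis $v''+pv'+qv\leqslant 0$ a.e.\ on $(a,b)$, which rearranges to $-\frac{v''+pv'}{v}\geqslant q$ a.e.; that is, the zeroth-order coefficient of $M$ dominates that of $L$, while both operators share the same first-order coefficient $p$. The Comparison Theorem (Theorem \ref{thcomp}) then gives $L\in\gT\bigl((a,b)\bigr)$ from $M\in\gT\bigl((a,b)\bigr)$. Finally, Theorem \ref{th21} asserts $\gT\bigl((a,b)\bigr)=\gT\bigl([a,b)\bigr)$, so $L\in\gT\bigl([a,b)\bigr)$, which is the desired conclusion.

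The main subtlety, and the reason this is not a literal corollary of Theorem \ref{thVP}, is that here $v$ is only assumed positive on the open interval $(a,b)$ and need not extend positively to the closed interval $[a,b]$ (indeed $b$ may be an endpoint where $v$ is unavailable or $v(b)=0$). Consequently I cannot invoke the closed-interval criterion directly, and I must route the argument through open subintervals via (\ref{eqdop}) and Theorem \ref{th21}. The perturbation trick used at $v(a)=0$ in Theorem \ref{thVP} is unnecessary here precisely because disconjugacy on $[a,b)$ only cares about the open interval, so I expect the cleanest route is to establish $M\in\gT\bigl((a,b)\bigr)$ first and then transfer to $[a,b)$ at the very end. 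I would take care that the hypotheses of Theorem \ref{thnefkrit}(1) and Theorem \ref{thcomp} are stated for the open interval, so that no boundary regularity of $v$ at $b$ is ever required.
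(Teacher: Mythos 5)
Your proof is correct and is essentially the paper's own argument: the paper disposes of Theorem \ref{VP'} with the single remark that it ``follows the same argument'' as Theorem \ref{thVP}, namely form the auxiliary operator $Mx=x''+px'-\frac{v''+pv'}{v}x$ from the test function $v$, invoke Theorem \ref{thnefkrit} to get disconjugacy of $M$, and conclude by the Comparison Theorem (Theorem \ref{thcomp}) --- which is exactly what you do on $(a,b)$, transferring to $[a,b)$ via Theorem \ref{th21} at the end. Your observation that the $\varepsilon$-perturbation step of Theorem \ref{thVP} is unnecessary here (and indeed unavailable, since positivity up to $b$ cannot be arranged on the half-open interval) is precisely the right way to carry out ``the same argument'' in this setting.
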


\textbf{7.}
By choosing a particular `test' function $v$ we can get various effective conditions for disconjugacy.

\begin{criteria}\label{A}
If $q(t)\leqslant 0$ on $[a,b]\subset I\quad \bigl(\text{в}\;(a,b)\subset I\bigr)$, then $L\in\gT([a,b])\quad \left(L\in \gT\bigl((a,b)\bigr)\right).$
\end{criteria}

\begin{proof}
We put $v(t)\equiv 1$ and then use Theorem \ref{thVP} \;(Theorem \ref{VP'}).
\end{proof}

\begin{criteria}\label{B}
Suppose that $p(t)=O(t-a)$ if $t\to a+,\;p(t)=O(b-t)$ if $t\to b-$ $($in particular, $p(t)\equiv 0).$ If 
\begin{equation*}
\frac{\pi}{b-a}\cot\,\frac{\pi (t-a)}{b-a}p(t)+q(t)\leqslant \frac{\pi ^2}{(b-a)^2},
\end{equation*}
then $L \in \gT\bigl([a, b)\bigr).$
\end{criteria}

\begin{proof}
Let us choose $v(t)\equiv \sin\,\frac{\pi (t-a)}{b-a} $ and then use Theorems  \ref{VP'} ad \ref{th21}.
\end{proof}

\begin{criteria}\label{C}
Suppose that we have inequality
\begin{equation}\label{C1}
|p(t)|\cdot \left | \frac{b+a}{2}-t\right |+|q(t)|\cdot \frac{(b-t)(t-a)}{2}\leqslant 1
\end{equation}
or inequality
\begin{equation}\label{C2}
\frac{b-a}{2}\underset{t\in (a,b)}{\essup}\,|p(t)|+\frac{(b-a)^2}{8}\underset{t\in (a,b)}{\essup}\,|q(t)|\leqslant1.
\end{equation}
Then $L \in \gT\bigl([a, b)\bigr).$
\end{criteria}

\begin{proof}
Indeed, we take $v(t)\equiv \frac{(b-t)(t-a)}{2}$ and then refer to Theorems \ref{th21} and \ref{VP'}.
\end{proof}

Let us note that inequality (\ref {C2}) implies inequality (\ref{C1}).
Let $P(t,\lambda):= \lambda ^2+p(t)\lambda +q(t)$ be the `characteristic' polynomial.

\begin{criteria}\label{D}
If there exists $\nu \in \mathbb R$ such that $P(t,\nu)\leqslant 0\quad (t\in (-\infty,+\infty)),$ then equation (\ref{eq1}) is disconjugate on $(-\infty,+\infty).$
\end{criteria}

\begin{proof}
One has $v(t):= e^{\nu t}>0$ and $(Lv)(t)=e^{\nu t}P(t,\nu)\leqslant 0$ on  $(-\infty,+\infty).$ 
The rest follows from Theorem \ref{thVP}.
\end{proof}

\textbf{8. }
Let us now formulate criteria that can be obtained from Theorem
 \ref{thVP}\;(Theorem  \ref{VP'}) using a `test' function depending on coefficients of equation 
(\ref{eq1}).

{\bf{$1^o.$}}
Let us consider equation
\begin{equation}\label{ord2.50}
\widetilde Lx:=  x''+Px'+Qx=0 
\end{equation}
having constant coefficients $P$ and $Q$, in assumption that it is disconjugate on $[a,b).$
Let $v$ be the solution of boundary value problem  $\widetilde Lv=-1,\quad v(a)=v(b)=0,$ let $\widetilde C(t,s)$
be Cauchy's function of equation (\ref{ord2.50}). Then
$$
\widetilde C(t,s)>0\quad (a\leqslant s<t<b)\quad \text{and}\quad v(t)=\int_a^b M(t,s)\,ds>0\quad (t\in (a,b)),
$$
where
$$
M(t,s):=\left\{
\begin{array}{rcl}
\dfrac{\widetilde C(b,t)\cdot \widetilde C(s,a)}{\widetilde C(b,a)},\quad a\leqslant s\leqslant t \leqslant b,\\
\dfrac{\widetilde C(t,a)\cdot \widetilde C(b,s)}{\widetilde C(b,a)},\quad a\leqslant t< s \leqslant b
\end{array}
>0,\quad (t,s)\in ((a,b)\times (a,b)).
\right.
$$
Since $(Lv)(t)=-1+\bigl(p(t)-P\bigr)v'(t)+\bigl(q(t)-Q\bigr)v(t),$ inequality $(Lv)(t)\leqslant 0$
is satisfied if
\begin{equation}\label{gener}
\bigl(p(t)-P\bigr)\int_a^b \frac{\partial M(t,s)}{\partial t}\,ds+\bigl(q(t)-Q\bigr)\int_a^b M(t,s)\,ds)ds\leqslant 1, \quad t\in (a,b).
\end{equation}
As a result, we get the following statement.

\begin{criteria}\label{XA1}
If $(\ref{gener})$ holds, then (\ref{eq1}) is disconjugate on $[a,b).$
\end{criteria}

The special choice of coefficients $P$ and $Q$ 
can lead to criteria for disconjugacy that are more subtle than the ones formulated above.

{\bf{$2^o.$}}
Consider the particular case $Q=0.$ We have
$$
M(t,s)=\left\{
\begin{array}{rcl}
\dfrac{(1-e^{-P(b-t)})(1-e^{-P(s-a)})}{P(1-e^{-P(b-a)})}\quad (s\leqslant t), \\
\dfrac{(1-e^{-P(t-a)})(1-e^{-P(b-s)})}{P(1-e^{-P(b-a)})}\quad (s>t).
\end{array}
\right.
$$
It is immediate that
\begin{multline*}
 v(t)=
\frac{(1-e^{-P(b-t)})(t-a-\frac{1}{P}(1-e^{-P(t-a)}))}{P(1-e^{-P(b-a)})}+\\ 
+\frac{(1-e^{-P(t-a)})(b-t-\frac{1}{P}(1-e^{-P(b-t)}))}{P(1-e^{-P(b-a)})}\leqslant 
\frac{2(\frac{b-a}{2}-\frac{1}{P}(1-e^{-P\frac{b-a}{2}}))}{P(1+e^{-P\frac{b-a}{2}}))}, \\
v'(t)=\frac{P\left((b-t)e^{-P(t-a)}-(t-a)e^{-P(b-t)}\right)+e^{-P(b-t)}-e^{-P(t-a)}}{P\left(1-e^{-P(b-a)}\right)}, \\
|v'(t)|\leqslant \frac{|P(b-a)+e^{-P(b-a)}-1|}{P(1-e^{-P(b-a)})}.
\end{multline*}
Since condition $Lv\leqslant 0$ is now equivalent to inequality 
$\bigl(p(t)-P\bigr)v'(t)+q(t)v(t)\leqslant 1,$ 
we get the following criterion.

\begin{criteria}\label{XA2}
If
$$
|p(t)\!-\!P| \frac{|P(b-a)\!+\!e^{-P(b-a)}-1|}{P(1-e^{-P(b-a)})}\!+\!|q(t)| 
\frac{2(\frac{b-a}{2}-\frac{1}{P}(1-e^{-P\frac{b-a}{2}}))}{P(1+e^{-P\frac{b-a}{2}}))}\!\leqslant \!1
$$ 
$(a<t<b),$ then equation (\ref{eq1}) is disconjugate on $[a,b).$
\end{criteria}

{\bf{$3^o.$}}
If we take, instead of auxiliary equation (\ref{ord2.50}), equation $\widetilde Lx:=  x''+p(t)x'=0$,
and take as $v$ the solution of problem
$\widetilde Lv=-1,\quad v(a)=v(b)=0$,
we obtain the following criterion.

\begin{criteria}\label{XA3}
If $
q(t)\,\int_a^b M(t,s)\,ds\leqslant 1, \quad t\in (a,b),
$
where
$$
M(t,s)=\left\{
\begin{array}{rcl}
\dfrac{\int_t^be^{-\int_t^{\sigma}p(\mu)\,d\mu}d\sigma\cdot \int_a^se^{-\int_a^{\sigma}p(\mu)\,d\mu}d\sigma} 
{\int_a^be^{-\int_a^{\sigma}p(\mu)\,d\mu}d\sigma}\quad (s\leqslant t), \\
\dfrac{\int_a^te^{-\int_a^{\sigma}p(\mu)\,d\mu}d\sigma\cdot \int_s^be^{-\int_s^{\sigma}p(\mu)\,d\mu}d\sigma} 
{\int_a^be^{-\int_a^{\sigma}p(\mu)\,d\mu}d\sigma}\quad (t<s),
\end{array}
\right.
$$
then equation (\ref{eq1}) is disconjugate on $[a,b).$
\end{criteria}

\textbf{9.} Let us now consider a second order criterion for disconjugacy on the whole real axis $\mathbb R$.
Let us consider differential equation
\begin{equation}\label{ord2.8}
\widetilde Lx:=  x''+px'+qx=0 
\end{equation}
having constant coefficients $p$ and $q$. As was shown before (see Criterion \ref{const}), disconjugacy of equation (\ref{ord2.8})
on $\mathbb R$ is equivalent to inequality $p^2-4q\geqslant 0.$

We will associate to equation (\ref{ord2.8}) the point $\widetilde{\mathcal L}=(p,q)$ in $p,q$-plane $\Pi$.
Let
$$
\mathfrak N:= \{(p,q):p^2-4q\geqslant 0\},\qquad \mathfrak O:= \mathbb R^2\setminus \mathfrak N.
$$
Then according to Criterion \ref{const} 
$$
\widetilde L\in\gT \bigl((-\infty,\,+\infty)\bigr) \Longleftrightarrow \widetilde{\mathcal L}\in\mathfrak N.
$$

Let us now consider differential equation
\begin{equation}\label{ord2.9}
Lx:=  x''+p(t)x'+q(t)x=0
\end{equation}
with coefficients continuous on $(-\infty,\,+\infty)$. 
Every equation of form (\ref{ord2.9}) gives rise to Jordan curve $G_L=\{t: (p(t),q(t))\}$ in plane $\Pi$.
More precisely, it determines the motion $DG_L$ along this curve. 

Consider the following problem: \textit{under which conditions the inclusion $G_L\subset \mathfrak N$ 
guaranteed disconjugacy of equation (\ref{ord2.9}) on $\mathbb R$?}

Below we formulate several possible (and simple) answers to this question.

{\bf{A.}} 
Let $p(t)\equiv p=\const$. Then inclusion $G_L\subset \mathfrak N$ is equivalent to inequality $q(t)\leqslant \frac{1}{4}p^2.$
Function $v(t):= e^{-\frac{p}{2}t}>0\;(t\in\mathbb R)$ satisfies inequality 
$$
(Lv)(t)=e^{-\frac{p}{2}t}\left(q(t)- \frac{1}{4}p^2\right)\leqslant 0\;(t\in\mathbb R).
$$
Hence,
{\it if  $p(t)\equiv p=\const,\; q(t)\leqslant \frac{1}{4}p^2,$ then equation} (\ref{ord2.9}) {\it is disconjugate on $\mathbb R$.}

{\bf{B.}} 
Let $G_L$ be a line or a segment, suppose $G_L\subset \mathfrak N.$  
The equation of such line has form either
$q(t)\equiv q=\const \leqslant 0$ (for any $p(t)$) or $p=p(t),\;q=-\gamma ^2+k\,p(t),$ where $|k|\leqslant \gamma\;(\gamma >0)$ (in the case
 $k=\pm \gamma$ the line touches parabola $q=\frac{1}{4}p^2$). 
In the first case disconjugacy of equation \ref{ord2.9}) on
on $\mathbb R$ follows from Theorem \ref{thcomp}. In the second case function $v(t):= e^{-kt}>0\;(t\in\mathbb R)$
satisfies inequality \quad $(Lv)(t)=e^{-kt}(k^2- \gamma ^2)\leqslant 0\;(t\in\mathbb R).$ Therefore,
{\it if $G_L$ is a line in $\Pi$ contained in $ \mathfrak N$, or a segment of such a line, then equation} 
(\ref{ord2.9}) {\it is disconjugate on $\mathbb R$.}

{\bf{C.}} 
Let $\gamma \geqslant 0$. Let us define
$$
\mathfrak M_{\pm} (\gamma)=\{(p,q): q\leqslant -\gamma ^2\pm\gamma \,p\} 
$$
Since both lines $q=-\gamma ^2\pm \gamma \,p$ touch parabola $q=\frac{1}{4}p^2,$ then $\mathfrak M_{\pm} (\gamma)\subset \mathfrak N$
for any $\gamma \geqslant 0$. 

Using the statements of sections ${\bf{A}}$ and {\bf{B}} and Theorem \ref{thcomp}, we get the following theorem.
 
\begin{theorem}
\label{thnew}
If for a certain $\gamma \geqslant 0$
$$
G_L\subset \mathfrak M_{+} (\gamma)\quad \bigl(G_L\subset \mathfrak M_{-} (\gamma)\bigr),
$$ 
then equation 
(\ref{ord2.9}) is disconjugate on $\mathbb R$.
\end{theorem}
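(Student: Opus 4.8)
The plan is to exhibit, for each inclusion, an explicit positive test function $v$ on $\mathbb{R}$ satisfying $Lv \leq 0$, and then to invoke the Valle--Poussin criterion (Theorem \ref{thVP}) together with the exhaustion identity (\ref{eqdop}) to pass from bounded intervals to the whole line. The two cases are mirror images of one another: the reflection $t \mapsto -t$ sends a solution of (\ref{ord2.9}) to a solution of the equation with coefficients $-p(-t),\,q(-t)$, interchanges $\mathfrak{M}_-(\gamma)$ with $\mathfrak{M}_+(\gamma)$, and preserves the number of zeros; so it suffices to treat $G_L \subset \mathfrak{M}_+(\gamma)$, the case $\mathfrak{M}_-(\gamma)$ being handled identically with $v(t) := e^{\gamma t}$.

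In the case $G_L \subset \mathfrak{M}_+(\gamma)$ the hypothesis reads $q(t) \leq -\gamma^2 + \gamma p(t)$ for all $t$. Guided by section \textbf{B}, where the tangent line carries the test function $e^{-kt}$ with $k=\gamma$, I would take $v(t) := e^{-\gamma t} > 0$. A one-line computation gives $(Lv)(t) = e^{-\gamma t}\bigl(\gamma^2 - \gamma p(t) + q(t)\bigr)$, and the bracket equals $q(t) - \bigl(-\gamma^2 + \gamma p(t)\bigr) \leq 0$ by hypothesis. Thus $v > 0$ and $Lv \leq 0$ on all of $\mathbb{R}$. Applying Theorem \ref{thVP} on an arbitrary $[a_n, b_n]$ yields $L \in \gT([a_n, b_n])$, and letting $a_n \to -\infty,\ b_n \to +\infty$ in (\ref{eqdop}) gives $L \in \gT\bigl((-\infty, +\infty)\bigr)$.

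An equivalent route, closer to the phrasing of section \textbf{C}, proceeds by comparison: set $\bar q(t) := -\gamma^2 + \gamma p(t)$, so that $q(t) \leq \bar q(t)$ and the curve of $\bar L x := x'' + p(t)x' + \bar q(t)x$ lies exactly on the tangent line $q = -\gamma^2 + \gamma p$. Section \textbf{B} gives $\bar L \in \gT\bigl((-\infty,+\infty)\bigr)$, and Theorem \ref{thcomp} then transfers disconjugacy from $\bar L$ to $L$. This formulation makes explicit why tangency is the crucial feature: the lines $q = -\gamma^2 \pm \gamma p$ touch the parabola $q = \frac{1}{4}p^2$ in a double point, which is exactly the inclusion $\mathfrak{M}_{\pm}(\gamma) \subset \mathfrak{N}$ recorded in section \textbf{C}.

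I do not expect a genuine obstacle. The single point deserving care is the transition from bounded intervals to $\mathbb{R}$, which is why I would route the argument through (\ref{eqdop}) rather than seek a whole-line version of Theorem \ref{thVP}. Everything else reduces to choosing the correct exponent, and that choice is forced by the geometry: the slope $\pm\gamma$ of the tangent line at its double contact with the parabola dictates the growth rate $e^{\mp\gamma t}$ of the optimal test function.
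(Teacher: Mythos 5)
Your proposal is correct and matches the paper's own (very terse) argument: the paper's proof is exactly the hint ``Put $v(t)=e^{-\gamma t}$ $\bigl(v(t)=e^{\gamma t}\bigr)$'' combined with the statements of sections \textbf{A}--\textbf{B} and Theorem \ref{thcomp}, which are precisely your two routes (direct test function via Theorem \ref{thVP} plus the exhaustion (\ref{eqdop}), and comparison against the tangent-line equation $\bar q=-\gamma^2+\gamma p$). Your write-up merely makes explicit the passage from compact intervals to $\mathbb R$, which the paper leaves implicit.
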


(Put $v(t)=e^{-\gamma\,t}\quad \bigl(v(t)=e^{\gamma\,t}\bigr).$)

We note that conditions of Theorem \ref{thnew} depend only on curve $G_L$ but not on motion $DG_L$ along this curve.

{\bf{D.}} The conditions of the statements below now \textit{depend} on motion $DG_L.$ 
\begin{theorem}
\label{thnew2}
Suppose that $r:\mathbb R\to \mathbb R$ is a continuous function, $p$  is a differentiable function, and one of the following conditions is satisfied: 
\begin{equation}\label{dop21}
p'(t)\geqslant 2r(t)\quad (p'(t)\leqslant -2r(t))\;(t\in \mathbb R)  
\end{equation}
or
\begin{equation}\label{dop22}
p^2(t)-4p'(t)+r(t)\leqslant 0\quad (p^2(t)+4p'(t)+r(t)\leqslant 0)\; (t\in \mathbb R).
\end{equation}
Also, let
$q(t)\leqslant \frac{p^2(t)}{4}+r(t)\quad (t\in \mathbb R).$ 
Then equation $(\ref{ord2.9})$ is disconjugate on $\mathbb R$.
\end{theorem}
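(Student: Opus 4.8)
The plan is to exhibit, in each case, a single function $v>0$ on all of $\mathbb R$ with $Lv\leqslant 0$ everywhere, and then to pass to disconjugacy through the Valle--Poussin criterion. Such a $v$, restricted to any compact $[a_n,b_n]$, satisfies the hypotheses of Theorem~\ref{thVP}, so $L\in\gT([a_n,b_n])$; choosing $a_n\to-\infty$ and $b_n\to+\infty$ and invoking identity~(\ref{eqdop}) upgrades this to $L\in\gT\bigl((-\infty,+\infty)\bigr)$, which is the asserted disconjugacy of~(\ref{ord2.9}). Since the standing hypothesis gives $q\leqslant\frac{p^2}{4}+r$, once a positive $v$ is chosen one has $Lv=v''+pv'+qv\leqslant v''+pv'+\bigl(\frac{p^2}{4}+r\bigr)v$, so it is enough to make this majorant nonpositive.

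For the conditions in~(\ref{dop21}) I would take $v(t):=\exp\bigl(-\frac12\int_0^t p(s)\,ds\bigr)$. This is positive, and because $p$ is differentiable it is twice differentiable with $v'=-\frac{p}{2}v$ absolutely continuous, so it is an admissible test function. A short computation gives $v''=\bigl(\frac{p^2}{4}-\frac{p'}{2}\bigr)v$ and therefore $Lv=\bigl(q-\frac{p^2}{4}-\frac{p'}{2}\bigr)v$. Using $q\leqslant\frac{p^2}{4}+r$ this is at most $\bigl(r-\frac{p'}{2}\bigr)v$, which is nonpositive exactly when $p'\geqslant 2r$ --- the first alternative of~(\ref{dop21}). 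Equivalently, after the change of variables $x=v\,u$ the equation becomes $u''+\bigl(q-\frac{p^2}{4}-\frac{p'}{2}\bigr)u=0$, whose coefficient is $\leqslant r-\frac{p'}{2}\leqslant 0$, so disconjugacy follows at once from Criterion~\ref{A} together with the Comparison Theorem~\ref{thcomp}.

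For the conditions in~(\ref{dop22}) the natural modification is to weight by the full integral of $p$, i.e. $v(t):=\exp\bigl(-\int_0^t p(s)\,ds\bigr)$, for which $v'=-pv$, $v''=(p^2-p')v$, and hence $Lv=(q-p')v$. Again bounding $q$ by $\frac{p^2}{4}+r$ reduces the inequality $Lv\leqslant 0$ to a pointwise condition of the shape~(\ref{dop22}). The parenthetical alternative in each of~(\ref{dop21}) and~(\ref{dop22}) is the mirror situation: I would treat it with the sign-reversed weight $\exp\bigl(+\frac12\int_0^t p\bigr)$, respectively $\exp\bigl(+\int_0^t p\bigr)$, the computation being symmetric. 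In all four cases positivity of $v$ on the whole line is automatic, so~(\ref{eqdop}) applies after Theorem~\ref{thVP}.

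Both families of test functions have the form $v=\exp\int_0^t w$, for which, after the $q$-bound, $Lv\leqslant 0$ is the Riccati-type inequality $w'+\bigl(w+\frac{p}{2}\bigr)^2+r\leqslant 0$; the weights $w=-\frac{p}{2}$ and $w=-p$ turn it into conditions of the form~(\ref{dop21}) and~(\ref{dop22}). The step I expect to be the main obstacle is precisely this bookkeeping: verifying that the chosen weights reproduce the \emph{exact} algebraic inequalities stated (matching the numerical coefficients in~(\ref{dop22}) is the delicate point), and confirming that the parenthetical/mirror cases are genuinely covered by the sign-reversed weights rather than requiring a separate argument. No smallness of $q$ beyond $q\leqslant\frac{p^2}{4}+r$ should be needed, which is the feature the article is emphasizing.
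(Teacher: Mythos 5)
Your handling of the first alternative of (\ref{dop21}) is correct and is exactly the paper's argument: the paper takes the same test function $v=e^{-\frac12\int_0^t p(s)\,ds}$, forms the auxiliary operator $L_2x=x''+px'+\bigl(\tfrac{p^2}{4}+r\bigr)x$, checks $L_2v=\bigl(r-\tfrac12 p'\bigr)v\leqslant 0$, and then cites Theorem \ref{thcomp}; your device of absorbing the comparison step into the pointwise bound $Lv\leqslant v''+pv'+\bigl(\tfrac{p^2}{4}+r\bigr)v$ (legitimate since $v>0$) is equivalent, as is your normal-form route via Criterion \ref{A}. Your bookkeeping worry about (\ref{dop22}) is also well founded, and there your computation, not the paper's, is the right one: with $v=e^{-\int_0^t p(s)\,ds}$ one has $Lv=(q-p')v$, so the method delivers the condition $p^2-4p'+4r\leqslant 0$ (equivalently $\tfrac{p^2}{4}+r\leqslant p'$), not the stated $p^2-4p'+r\leqslant 0$; the paper's own display for $L_2v$ at that point is internally inconsistent (it has $-\tfrac12 p'$ where $-p'$ must stand), and the coefficient-one version cannot be reached by any weight.

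The genuine gap in your proposal is the parenthetical (mirror) cases: they are not ``the computation being symmetric.'' Flipping only the sign of the weight does not reflect the operator: with $v=e^{+\frac12\int_0^t p(s)\,ds}$ one gets $Lv=\bigl(q+\tfrac{3p^2}{4}+\tfrac{p'}{2}\bigr)v$, and under $q\leqslant\tfrac{p^2}{4}+r$, $p'\leqslant-2r$ this is only bounded by $p^2v\geqslant 0$, so nothing follows. The paper instead uses time reversal $y(t)=x(-t)$, which replaces $p(t)$ by $-p(-t)$ and $q(t)$ by $q(-t)$; but that argument is itself fallacious (the paper silently writes $-p(t)$ for $-p(-t)$), since reversal sends $p'(t)$ to $p'(-t)$ with no sign change and therefore carries the condition class $p'\geqslant 2r$ into itself rather than into $p'\leqslant -2r$. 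In fact the parenthetical claim of (\ref{dop21}) is false as stated: take $K>0$, $p(t)=-Kt$, $r(t)\equiv K/2$, $q(t)=\tfrac{K^2t^2}{4}+\tfrac{K}{2}$; then $p'=-2r$ and $q=\tfrac{p^2}{4}+r$ hold on all of $\mathbb R$, yet $x(t)=e^{Kt^2/4}\sin\bigl(\sqrt{K}\,t\bigr)$ solves (\ref{ord2.9}) and has infinitely many zeros. So your hesitation was the right instinct: what your argument actually proves is the first alternative of (\ref{dop21}) and the first alternative of (\ref{dop22}) with $4r$ in place of $r$; the mirrored alternatives cannot be rescued by sign-reversed weights, nor by any other means in the form stated.
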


\begin{proof}
Suppose that we have the first inequality (\ref{dop21}). Under our assumptions, consider differential equation
\begin{equation}\label{dop23}
L_2x:= x''+p(t)x'+\left(\frac{p^2(t)}{4}+r(t)\right)\,x 
\end{equation}
Put
$$
v(t)=e^{-\frac{1}{2}{\displaystyle\int_0^t} p(s)\,ds};\quad \text{then}\quad v(t) >0,\quad (L_2v)(t)=\left(-\frac{1}{2}p'(t)+r(t)\right)\,e^{-\frac{1}{2}{\displaystyle\int_0^t} p(s)\,ds}\leqslant 0,\;t\in \mathbb R.
$$ 
Consequently, 
$L_2\in\gT ((-\infty,\,+\infty)).$ Disconjugacy of equation (\ref{ord2.9}) now follows from Theorem \ref{thcomp}.

If we have the second inequality (\ref{dop21}), then, making substitution $y(t)=x(-t),$ we arrive to equation 
$$
y''-p(t)y'+q(t)y=0$$ 
and expression
$$L_2y:= y''-p(t)y'+\left(\frac{p^2(t)}{4}+r(t)\right)\,y, 
$$
for which we define
$v(t)=e^{\frac{1}{2}{\displaystyle\int_0^t} p(s)\,ds}.$ The rest repeats the argument above.

Suppose that the first inequality (\ref{dop22}) is satisfied. Now we put  $$v(t)=e^{-\displaystyle\int_0^t p(s)\,ds}\;(>0).$$ It is easy to see that 
$$(L_2v)(t)=\left(p^2(t)-\frac{1}{2}p'(t)-p^2(t)+\frac{p^2(t)}{4}+r(t)\right)\,e^{-\frac{1}{2}{\displaystyle\int_0^t} p(s)\,ds}\leqslant 0,\;t\in \mathbb R.$$ 
Hence, $L_2\in\gT ((-\infty,\,+\infty))$, and we only need to use Theorem \ref{thcomp}. The argument in the case when the second inequality (\ref{dop22}) is satisfied is similar.
\end{proof}

Let us note that inequality (\ref{dop22}) is satisfied (in fact, as an equality) if, for instance,
$$
r(t)\equiv -R^2, \qquad p(t)=\dfrac{R\left(1-c^2e^{\frac{Rt}{2}}\right)}{1+c^2e^{\frac{Rt}{2}}}.
$$ 
In this case equation $L_2x=0$ has solution
$$
x(t)=e^{-\displaystyle\int_0^t \dfrac{R\left(1-c^2e^{\frac{Rs}{2}}\right)}{1+c^2e^{\frac{Rs}{2}}}\,ds}\;(>0).
$$

Note also that unlike the case of a differential equation with constant coefficients, in the case of variable coefficients condition
$G_L\subset \mathfrak N$ 
is not necessary for disconjugacy of equation (\ref{ord2.9}) on $\mathbb R$. For example, equation
$$
(\overline L)(t):= x''+tx'+\left(\frac{t^2}{4}+\frac{1}{2}\right)x=0,
$$
having solution $x=e^{-\frac{t^2}{4}}>0$ $(t\in\mathbb R),$
is disconjugate on $\mathbb R$, although $G_{\overline L}\subset \mathfrak O.$ 
The same is true for a more general equation
$$
x''+p(t)x'+\left(\frac{p^2(t)}{4}+\frac{1}{2}p'(t)\right)x=0,\qquad p'(t)>0,\; t\in \mathbb R,
$$
which has solution
$x=e^{-\frac{1}{2}{\displaystyle\int_0^t} p(s)\,ds}>0,\;t \in \mathbb R.$ 

Using the last example and Theorem \ref{thcomp} we derive the following strengthening of Theorem \ref{thnew2}.

\begin{theorem}
\label{thnew3}
If function $p$ is differentiable, $p'(t)\geqslant 0\quad (p'(t)\leqslant 0)$ on $\mathbb R$ and 
$$ 
q(t)\leqslant \frac{p^2(t)}{4}+\frac{1}{2}p'(t)\quad\left(q(t)\leqslant \frac{p^2(t)}{4}-\frac{1}{2}p'(t)\right)\quad (t\in\mathbb R),
$$
then equation $(\ref{ord2.9})$ is disconjugate on $\mathbb R$.
\end{theorem}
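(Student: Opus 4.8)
The plan is to deduce disconjugacy by comparison (Theorem \ref{thcomp}) with an explicitly solvable equation whose zero-order coefficient sits exactly on the asserted bound, using that such an equation carries a positive solution manufactured from $p$. For the first alternative ($p'\geq 0$, $q\leq \frac{p^2}{4}+\frac12 p'$) I would set $v(t):=e^{-\frac12\int_0^t p(s)\,ds}>0$ and verify by direct differentiation that $v$ solves $L_0x:=x''+p x'+\bigl(\frac{p^2}{4}+\frac12 p'\bigr)x=0$ on $\R$; this is precisely the generalized example displayed just before the theorem. By Theorem \ref{thnefkrit} together with (\ref{eqdop}) this gives $L_0\in\gT(\R)$, and since $q\leq \frac{p^2}{4}+\frac12 p'$ and $L$, $L_0$ share the same first-order coefficient $p$, Theorem \ref{thcomp} yields $L\in\gT(\R)$. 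Equivalently one may bypass $L_0$ and feed $v$ straight into the Valle--Poussin criterion (Theorems \ref{thVP}, \ref{VP'}): the same computation gives $Lv=\bigl(q-\frac{p^2}{4}-\frac12 p'\bigr)v\leq 0$ with $v>0$.

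The second alternative is where the actual work lies, because the first-order term $p x'$ is not symmetric: with first-order coefficient $+p$ the exponential $e^{-\frac12\int p}$ only cancels the $p^2$-contribution in the direction $+\frac12 p'$, and the naive exponential fails to be a positive (super)solution for the coefficient $\frac{p^2}{4}-\frac12 p'$ while keeping $p$ fixed. The remedy, exactly as in the proof of Theorem \ref{thnew2}, is the reflection $y(t):=x(-t)$, which sends (\ref{ord2.9}) to $y''-p(-t)y'+q(-t)y=0$; after relabelling the independent variable the first-order coefficient becomes $-p$, and now $v(t):=e^{+\frac12\int_0^t p(s)\,ds}$ is a positive solution of the reflected comparison equation $Y''-pY'+\bigl(\frac{p^2}{4}-\frac12 p'\bigr)Y=0$ (direct check: the $\frac14 p^2$ from $v''$ and the $-\frac12 p^2$ from $-pv'$ combine with the coefficient to annihilate $v$). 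As before, Theorem \ref{thnefkrit} and (\ref{eqdop}) give disconjugacy of this equation on $\R$, Theorem \ref{thcomp} promotes it to the reflected $L$ using $q\leq \frac{p^2}{4}-\frac12 p'$, and finally the change of variable $t\mapsto -t$ carries zeros to zeros and maps $\R$ onto $\R$, so disconjugacy transfers back to the original equation.

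A one-line alternative is to observe that the theorem is merely Theorem \ref{thnew2} specialized to $r(t):=\frac12 p'(t)$ (first alternative) and $r(t):=-\frac12 p'(t)$ (second alternative): in each case the hypothesis $q\leq \frac{p^2}{4}+r$ becomes the stated bound, while the structural condition (\ref{dop21}) reduces to the trivial identity $p'=p'$, so Theorem \ref{thnew2} applies verbatim. I would present the self-contained comparison argument as the main line and mention this specialization as a remark. The one point needing care is the asymmetry just described: the reflection in the second alternative is not optional, and one must ensure Theorem \ref{thcomp} is invoked with identical first-order coefficients and that the passage from a compact exhaustion to all of $\R$ goes through (\ref{eqdop}). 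I would also note that the sign hypotheses $p'\geq 0$ (respectively $p'\leq 0$) are used only to guarantee that the bound $\frac{p^2}{4}\pm\frac12 p'$ genuinely relaxes $q\leq \frac{p^2}{4}$, i.e.\ that $G_L$ may leave $\mathfrak N$; the comparison argument itself does not require them.
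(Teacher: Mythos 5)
Your treatment of the first alternative is correct and coincides with the paper's argument: $v(t)=e^{-\frac12\int_0^tp(s)\,ds}$ is a positive solution of the comparison equation $x''+p(t)x'+\bigl(\tfrac{p^2(t)}{4}+\tfrac12 p'(t)\bigr)x=0$, so Theorem \ref{thnefkrit} and (\ref{eqdop}) give disconjugacy of that equation on $\mathbb R$, and Theorem \ref{thcomp} transfers it to $L$; you are also right that $p'\geqslant 0$ is never used in this half. The second alternative, however, breaks exactly at the word ``relabelling''. You correctly write the reflected equation as $y''-p(-t)y'+q(-t)y=0$, but then silently replace the coefficient $-p(-t)$ by $-p(t)$. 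That is not a relabelling of the independent variable: $-p(-t)$ and $-p(t)$ are different functions, and Theorem \ref{thcomp} requires the two equations being compared to have \emph{identical} first-order coefficients. Tracking the composition with $-t$ honestly, the reflected coefficients $P(t)=-p(-t)$, $Q(t)=q(-t)$ satisfy $P'(t)=p'(-t)$, hence the hypothesis $q\leqslant\frac{p^2}{4}-\frac12 p'$ turns into $Q\leqslant\frac{P^2}{4}-\frac12 P'$ with $P'\leqslant 0$: the ``minus'' case is invariant under reflection, so reflection converts nothing into the ``plus'' case. Your exponential $e^{\frac12\int_0^t p}$ does solve $Y''-p(t)Y'+\bigl(\tfrac{p^2(t)}{4}-\tfrac12 p'(t)\bigr)Y=0$, and that equation is disconjugate, but it cannot be compared with the reflected original equation, whose first-order coefficient is $-p(-t)$, not $-p(t)$. (The paper's proof of Theorem \ref{thnew2} makes precisely the same slip, writing the reflected equation as $y''-p(t)y'+q(t)y=0$; so your fallback of specializing Theorem \ref{thnew2} with $r=-\frac12 p'$ inherits the error rather than repairing it.)

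The gap is not fixable, because the second alternative is false as stated. Take $p(t)=-2\tanh t$, $q(t)=\tanh^2 t$: then $p'(t)=-2\,\mathrm{sech}^2t\leqslant 0$ and $\frac{p^2(t)}{4}-\frac12 p'(t)=\tanh^2t+\mathrm{sech}^2t\equiv 1\geqslant q(t)$, so the hypotheses hold. Substituting $x(t)=u(t)\cosh t$ (a positive factor, so zeros are preserved) reduces the equation to $u''+\mathrm{sech}^2t\,u=0$, which is not disconjugate on $\mathbb R$. Indeed, were it disconjugate on $[-3,3]$, Theorem \ref{thnefkrit} would give a solution $z>0$ there, and writing $\varphi=z\psi$ and integrating by parts with $z''=-\mathrm{sech}^2t\,z$ yields $\int_{-3}^{3}\bigl[(\varphi')^2-\mathrm{sech}^2t\,\varphi^2\bigr]\,dt=\int_{-3}^{3}z^2(\psi')^2\,dt\geqslant 0$ for every absolutely continuous $\varphi$ vanishing at $\pm3$; yet the piecewise linear $\varphi$ equal to $1$ on $[-1,1]$ and vanishing at $\pm 3$ gives $\int(\varphi')^2\,dt=1<2\tanh 1\leqslant\int\mathrm{sech}^2t\,\varphi^2\,dt$, a contradiction. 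So no argument can establish the ``minus'' half; what your method (and the paper's) actually proves is only the first alternative, which holds for every differentiable $p$ with $q\leqslant\frac{p^2}{4}+\frac12 p'$, with no sign condition on $p'$.
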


\begin{remark}
It is easy to show that a differential equation which is close to disconjugate differential equation, is disconjugate as well.
The latter allows to weaken the assumptions on coefficient $p$, demanding only non-strict monotone increasing (decreasing).
\end{remark}

\textbf{10.} 
Finally, let us consider equation
\begin{equation}\label{ord2.11}
Lx:=  x''+p(t)x'+q(t)x=0 \quad (t\in (a,+\infty))
\end{equation}
with coefficients continuous on $(a,+\infty)$. Substitution $t\to a+t^2$ reduces equation (\ref{ord2.11}) to equation
\begin{equation}\label{ord2.12}
Lx:=  x''+p(a+t^2)x'+q(a+t^2)x=0 \quad (t\in (-\infty,+\infty)).
\end{equation}
Now, disconjugacy of equation (\ref{ord2.12}) on $\mathbb R$ is equivalent to disconjugacy of equation (\ref{ord2.11}) on $(a,+\infty).$ By applying criteria for disconjugacy for equation (\ref{ord2.12}), we get criteria for disconjugacy of equation (\ref{ord2.11}) on $(a,+\infty).$

\end{document}